\newsavebox{\@brx}
\newcommand{\llangle}[1][]{\savebox{\@brx}{\(\m@th{#1\langle}\)}%
  \mathopen{\copy\@brx\kern-0.5\wd\@brx\usebox{\@brx}}}
\newcommand{\rrangle}[1][]{\savebox{\@brx}{\(\m@th{#1\rangle}\)}%
  \mathclose{\copy\@brx\kern-0.5\wd\@brx\usebox{\@brx}}}
\newtheorem{theorem}{Theorem}[section]
\newtheorem{corollary}[theorem]{Corollary}
\newtheorem{lemma}[theorem]{Lemma}
\newtheorem{proposition}[theorem]{Proposition}
\theoremstyle{definition}
\newtheorem{remark}[theorem]{Remark}
\numberwithin{equation}{subsection}
\newcommand{\Env}{\operatorname{Env}}
\newcommand{\Inn}{\operatorname{Inn}}
\newcommand{\C}{\operatorname{C}}
\newcommand{\mcg}{\mathcal{M}}
\newcommand{\dq}{\mathcal{D}}
\begin{document}

\title{Finiteness of canonical quotients of Dehn quandles of surfaces}
\author{Neeraj K. Dhanwani}
\author{Mahender Singh}

\address{Department of Mathematical Sciences, Indian Institute of Science Education and Research (IISER) Mohali, Sector 81, S. A. S. Nagar, P. O. Manauli, Punjab 140306, India.}
\email{neerajk.dhanwani@gmail.com}
\email{mahender@iisermohali.ac.in}

\subjclass[2020]{Primary 57K10, 57K20; Secondary 57K12}
\keywords{Artin group, Dehn quandle, involutory quandle, mapping class group, universal central extention}

\begin{abstract}
The Dehn quandle of a closed orientable surface is the set of isotopy classes of non-separating simple closed curves with a natural quandle structure arising from Dehn twists. In this paper, we consider finiteness of some canonical quotients of these quandles. For a surface of positive genus,  we give a precise description of the 2-quandle of its Dehn quandle. Further, with some exceptions for genus more than two, we determine all values of $n$ for which the $n$-quandle of its Dehn quandle is finite. The result can be thought of as the Dehn quandle analogue of  a similar result of Hoste and Shanahan for link quandles. We also compute the size of the smallest non-trivial quandle quotient of the Dehn quandle of a surface. Along the way, we prove that the involutory quotient of an Artin quandle is precisely the corresponding Coxeter quandle and also determine the smallest non-trivial quotient of a braid quandle.
\end{abstract}
\maketitle

\section{Introduction}
The Dehn quandle of a group with respect to a subset is the union of conjugacy classes of elements of the set equipped with the operation of conjugation. It turns out that these are precisely the quandles that embed into their enveloping groups, and many well-known classes of quandles satisfy this property \cite{Dhanwani-Raundal-Singh-2021}. The first being the class of free quandles, where a free quandle on a set is simply the union of conjugacy classes of the free generating set in the corresponding free group. The second interesting class of examples, which is also the focus of this paper, is given by surfaces. Let $\mathcal{M}_g$ be the mapping class group of a closed orientable surface $S_g$ of genus $g \ge 1$ and $\mathcal{D}_g^{ns}$ the set of isotopy classes of non-separating simple closed curves on $S_g$. It is well-known that $\mathcal{M}_g$ is generated by Dehn twists along finitely many simple closed curves from $\mathcal{D}_g^{ns}$ \cite[Theorem 4.1]{Farb-Margalit2012}. The binary operation $\alpha * \beta= \tau_\beta(\alpha),$ where $\alpha, \beta \in \mathcal{D}_g^{ns}$ and $\tau_\beta$ is the Dehn twist along $\beta$, turns $\mathcal{D}_g^{ns}$ into a quandle called the Dehn quandle of the surface $S_g$. In fact, $\mathcal{D}_g^{ns}$ can be seen as a subquandle of the conjugation quandle of $\mathcal{M}_g$, by identifying the isotopy class of a non-separating simple closed curve with the isotopy class of its corresponding Dehn twist. These quandles originally appeared in the work of Zablow \cite{Zablow1999, Zablow2003}.  It turns out that two surfaces of genus more than two are homeomorphic if and only if their Dehn quandles are isomorphic \cite[Proposition 6.5]{Dhanwani-Raundal-Singh-2021}. A homology theory based on Dehn quandles of surfaces has been derived in  \cite{Zablow2008} and it has been shown that isomorphism classes of Lefschetz fibrations over a disk correspond to quandle homology classes in dimension two. In a related direction, the papers \cite{kamadamatsumoto,Yetter2002, Yetter2003} considered a quandle structure on the set of isotopy classes of simple closed arcs on an orientable surface with at least two punctures, referred as the quandle of cords. In the case of a disk with $n$ punctures, this quandle is simply the Dehn quandle of the braid group $B_n$ on $n$ strands with respect to its standard set of generators. It turns out that Dehn quandles of groups with respect to their subsets include many well-known constructions of quandles from groups including conjugation quandles of groups, free quandles, Coxeter quandles, Dehn quandles of closed orientable surfaces, quandles of cords of orientable surfaces, knot quandles of prime knots, core quandles of groups and generalized Alexander quandles of groups with respect to fixed-point free automorphisms (see \cite{Dhanwani-Raundal-Singh-2021} for more details). A presentation for the quandle of cords of the plane and the 2-sphere has been given in \cite{kamadamatsumoto}. Beyond these cases, not much was known about presentations of Dehn quandles until the recent work \cite{Dhanwani-Raundal-Singh-2022}, wherein two approaches to write explicit presentations for Dehn quandles using presentations of their underlying groups have been given. Several computations of presentations have been given, including Dehn quandles of spherical Artin groups, surface groups and mapping class groups of orientable surfaces for small genera.
\par

Since link quandles are always infinite except for the unknot and the Hopf link, it is natural to explore finiteness of $n$-quandles of link quandles. It had been conjectured by Przytycki that the $n$-quandle of an oriented link $L$ in the $3$-sphere is finite if and only if the fundamental group of the $n$-fold cyclic branched cover of the $3$-sphere, branched over $L$, is finite. The conjecture has been proved by Hoste and Shanahan  in \cite{MR3704243}, wherein they use Dunbar's classification \cite{MR0977042} of spherical $3$-orbifolds to determine all links with a finite $n$-quandle for some $n$.  In this paper, we consider the analogous problem for Dehn quandles of surfaces. The two families of quandles have a rather curious intersection. Thanks to the work \cite{NiebrzydowskiPrzytycki2009} of Niebrzydowski and Przytycki where they proved that the knot quandle of the trefoil is isomorphic to the Dehn quandle of the torus. The problem can be thought of as a quandle analogue of \cite[Problem 28]{MR0375281}, which asks whether the normal closure of squares of Dehn twists in the mapping class group of a closed orientable surface is of finite index. See also \cite{Humphries1992, Funar1999} for further work on the problem.
\par

The paper is organised as follows. In Section \ref{sec-n-quandles-dehn-quandles}, we establish some general results on finiteness of $n$-quandles of Dehn quandles. We prove that if the $n$-quandle of the Dehn quandle of a group is finite, then certain canonical quotient of the group is finite (Theorem \ref{finiteness of g_n}). We consider Dehn quandles of Artin groups in Section \ref{inv quotients of Artin quandle} and prove that the involutory quandle of an Artin quandle is the corresponding Coxeter quandle (Theorem \ref{Artin to Coxeter quandle}). We also determine the size of the smallest non-trivial quandle quotient of the Dehn quandle of the braid group with respect to its standard generating set  (Proposition \ref{bound of braid quandle image}). In Section \ref{inv quotients of Dehn quandle}, we discuss finiteness of $n$-quandles of Dehn quandles of surfaces. For a closed orientable surface of positive genus, we give a precise description of its 2-quandle (Theorem \ref{dg-iso-pg2}). Further, with some exceptions for genus more than two, we determine all values of $n$ for which the $n$-quandle of the Dehn quandle is finite (Theorem \ref{finiteness n-quandle surface}). As a final result, we also determine the size of the smallest non-trivial quandle quotient of the Dehn quandle of a surface (Proposition \ref{smallest quotient of dehn surface}).
\medskip

\section{Preliminaries}\label{section-prelim}
Throughout the paper, we consider right-distributive quandles. Recall that, a {\it quandle} is a non-empty set $Q$ equipped with a binary operation $*$ satisfying the following axioms:
\begin{enumerate}[(i)]
\item $x*x=x$\, for all $x\in Q$.
\item For each $x, y \in Q$, there exists a unique $z \in Q$ such that $x=z*y$.
\item $(x*y)*z=(x*z)*(y*z)$\, for all $x,y,z\in Q$.
\end{enumerate}
\par
Topologically, the three quandle axioms correspond to the three Reidemeister moves of planar diagrams of links in the 3-space. Though quandles are ubiquitous, groups and knots are prominent sources of quandles. For instance, if $G$ is a group, then the binary operation $x*y=y x y^{-1}$ turns $G$ into the quandle called the \textit{conjugation quandle} of $G$. Concerning knot theory, every link can be assigned a quandle called the \textit{link quandle} which is a complete invariant of non-split links up to weak equivalence. This fundamental result appeared independently in the works of Joyce \cite{Joyce1979, Joyce1982} and Matveev \cite{Matveev1982}, and has led to much of the recent works on the subject. 
\par

Morphisms and automorphisms of quandles are defined in the obvious way. Note that the quandle axioms are equivalent to saying that for each $y\in Q$, the map $S_y:Q\to Q$ given by $S_y(x)=x* y$ is an automorphism of $Q$ fixing $y$. The group $\Inn(Q)$ generated by such automorphisms is called the group of inner automorphisms of $Q$. The group $\Inn(Q)$ acts on the quandle $Q$ and the corresponding orbits are referred as connected components. For example, knot quandles are connected, whereas link quandles of split links are not connected.
\par

The \textit{enveloping group} $\Env(Q)$ of a quandle $Q$ is the group with the set of generators as $\{e_x~|~x \in Q\}$ and the defining relations as
\begin{equation*}
e_{x*y}=e_y e_x e_y^{-1}
\end{equation*}
for all $x,y\in Q$. For example, the enveloping group of the link quandle of a link in the 3-space is the corresponding link group \cite{Joyce1979, Joyce1982}. The natural map
\begin{equation*}
\eta: Q \to \Env(Q)
\end{equation*}
given by $\eta(x)=e_x$ is a quandle homomorphism with $\Env(Q)$ viewed as the conjugation quandle. The map $\eta$ is not injective in general. In fact, Dehn quandles are precisely the ones for which this map is injective.  The functor from the category of quandles to that of groups assigning the enveloping group to a quandle is left adjoint to the functor from the category of groups to that of quandles assigning the conjugation quandle to a group. Thus, enveloping groups play a crucial role in understanding of quandles themselves. 
\par

Using defining axioms  \cite[Lemma 4.4.7]{MR2634013}, any element of a quandle can be written in a left-associated product of the form
\begin{equation*}
\left(\left(\cdots\left(\left(a_0*^{e_1}a_1\right)*^{e_2}a_2\right)*^{e_3}\cdots\right)*^{e_{n-1}}a_{n-1}\right)*^{e_n}a_n,
\end{equation*}
which, for simplicity, we write as
\begin{equation*}
a_0*^{e_1}a_1*^{e_2}\cdots*^{e_n}a_n.
\end{equation*}

Let $n \ge 2$ be an integer. A quandle $Q$ is called an {\it $n$-quandle} if $$x*\underbrace{y *y*\cdots *y}_{n ~\mathrm{times}}=x$$ for all $x, y \in Q$. Equivalently, a quandle is an $n$-quandle if and only if each basic inner automorphism $S_x$ has order dividing $n$.  A 2-quandle is also called {\it involutory}. For example, the core quandle of any group is involutory.
\medskip

\section{Finiteness of $n$-quandles of Dehn quandles }\label{sec-n-quandles-dehn-quandles}
Let $G$ be a group, $A$ a non-empty subset of $G$ and $A^G$ the set of all conjugates of elements of $A$ in $G$. The {\it Dehn quandle} $\mathcal{D}(A^G)$ of $G$ with respect to $A$ is defined as the set $A^G$ equipped with the binary operation of conjugation, that is, $$x*y=yxy^{-1}$$ for all $x, y \in \mathcal{D}(A^G)$. The class of Dehn quandles contains many well-known constructions of quandles from groups. Notice that $\mathcal{D}(G^G)$ is the conjugation quandle of $G$. If $F(S)$ is the free group generated by $S$, then $\mathcal{D}(S^{F(S)})$ is the free quandle on $S$. If $(W, S)$ is a Coxeter system, then $\mathcal{D}(S^{W})$ is the so called Coxeter quandle \cite{MR4175808, Nosaka2017}. Let $S_{g}$ be a closed orientable surface of genus $g$ and $\mathcal{M}_{g}$ its mapping class group. If $S$ is the set of Dehn twists about non-separating simple closed curves, then $\mathcal{D}(S^{\mathcal{M}_{g}})$ is the Dehn quandle of the surface \cite{Zablow1999, Zablow2003, kamadamatsumoto}. 
\par

Given a quandle $Q$ and an integer $n \ge2$, the $n$-quandle $(Q)_n$ of $Q$ is defined as the quotient of $Q$ by the relations $$x*^n y:=x*\underbrace{y*y*\cdots*y}_{n~ \mathrm{times}}=x$$ for all $x,y \in Q$. Notice that there is a natural epimorphism $\pi: Q \to (Q)_n$. Further, the construction satisfies the universal property that for any quandle homomorphism $\phi:Q \to Y$, where $Y$ is a $n$-quandle, there exists a unique quandle homomorphism $\bar{f}: (Q)_n \to Y$ such that $\bar{f} ~\pi= f$. The construction is of particular interest from the point of view of knot theory. Since link quandles are always infinite except for the unknot and the Hopf link, it is interesting to explore finiteness of $n$-quandles of link quandles. Using Thurston's geometrization theorem and Dunbar's classification of spherical 3-orbifolds, a complete classification of links whose $n$-quandles are finite for some $n$ has been given in \cite{MR3704243}.
\par

 We begin with the following general observation.
 \par
 
\begin{proposition}\label{lem:presen_n_quandle}
Let $Q$ be a quandle with a presentation $\langle S \mid R \rangle$. Then, for each $n \ge 2$, $(Q)_n$ has a presentation $\langle S \mid R \cup T \rangle$, where $T=\{x*^n y=x \mid x,y\in S\}$.
\end{proposition}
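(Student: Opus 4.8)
The plan is to compare the quandle $Q' := \langle S \mid R \cup T \rangle$ with the $n$-quandle $(Q)_n$ by exhibiting mutually inverse homomorphisms that fix the generating set $S$. Write $Q = \langle S \mid R \rangle$, and observe that since $R \subseteq R \cup T$ there is a canonical epimorphism $Q \to Q'$ sending each generator to itself. In the other direction, the relations $T$ hold in $(Q)_n$ because it is an $n$-quandle (so $x *^n y = x$ there for \emph{all} elements, in particular for $x,y \in S$), and the relations $R$ hold in $(Q)_n$ because they already hold in $Q$; hence the universal property of the presentation $\langle S \mid R \cup T \rangle$ yields a homomorphism $\phi \colon Q' \to (Q)_n$ fixing $S$. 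The entire content of the proposition then reduces to producing a homomorphism the other way, and for this it suffices to prove that $Q'$ is \emph{itself} an $n$-quandle: granting this, $(Q)_n$ being the universal $n$-quandle quotient of $Q$ supplies a homomorphism $\psi \colon (Q)_n \to Q'$, and since both $\phi$ and $\psi$ fix $S$ while $S$ generates both quandles, $\phi$ and $\psi$ are mutually inverse, giving $(Q)_n \cong Q'$.

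The key step, and the only place that needs real work, is to upgrade the relations in $T$, which assert $x *^n y = x$ only for $x,y \in S$, to the full $n$-quandle condition $S_z^n = \id$ for every $z \in Q'$. I would use two ingredients. The first is the inner-automorphism identity $S_{u*v} = S_v\, S_u\, S_v^{-1}$, together with its inverse form $S_{u *^{-1} v} = S_v^{-1} S_u\, S_v$, both of which follow directly from quandle axiom (iii) and hold in any quandle, hence in $Q'$; they say that $\Inn(Q')$ carries each basic automorphism to another basic automorphism under conjugation. The second ingredient is the left-associated normal form of \cite[Lemma 4.4.7]{MR2634013}: every $z \in Q'$ can be written as $a_0 *^{e_1} a_1 *^{e_2} \cdots *^{e_k} a_k$ with each $a_i \in S$ and each $e_i = \pm 1$, since $S$ generates $Q'$.

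Combining these, I would induct on the length $k$ of such an expression for $z$. Writing $z = w *^{e_k} a_k$ with $w = a_0 *^{e_1} \cdots *^{e_{k-1}} a_{k-1}$, the identities above give $S_z = S_{a_k}^{e_k}\, S_w\, S_{a_k}^{-e_k}$, so $S_z$ is conjugate in $\Inn(Q')$ to $S_w$; by the inductive hypothesis $S_w$ is conjugate to $S_{a_0}$ with $a_0 \in S$, whence $S_z$ is conjugate to $S_{a_0}$. Since conjugate group elements share the same order and $S_{a_0}^n = \id$ by a relation in $T$, we get $S_z^n = \id$, so $Q'$ is an $n$-quandle and the argument closes. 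The main point to get right is the reduction itself, namely recognizing that the $n$-quandle condition is a statement about orders of basic inner automorphisms and that conjugation within $\Inn(Q')$ moves every $S_z$ onto some $S_{a_0}$ with $a_0 \in S$; once this is seen, the induction on the normal-form length is routine, and I anticipate no genuine difficulty beyond carefully tracking the signs $e_i = \pm 1$.
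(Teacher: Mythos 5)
Your overall strategy --- show that $Q' = \langle S \mid R \cup T\rangle$ is itself an $n$-quandle and then invoke the universal property of $(Q)_n$ --- is exactly the paper's, and your reduction of the general case to generators via $S_{u*v} = S_v\, S_u\, S_v^{-1}$ is the same induction the paper performs on the word length of $y$, just phrased inside $\Inn(Q')$ instead of element by element. The one step you state too quickly is ``$S_{a_0}^n = \id$ by a relation in $T$'': the relations in $T$ only assert that $S_{a_0}^n$ fixes each element of $S$, not that it is the identity automorphism of $Q'$, and it is the latter that your conjugacy argument needs (conjugate elements of $\Aut(Q')$ have the same order as automorphisms of all of $Q'$, so you must know $S_{a_0}^n$ is trivial on all of $Q'$, not just on $S$). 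The repair is short but has to be said: $S_{a_0}^n$ is an automorphism of $Q'$ whose fixed-point set is a subquandle (if $f(u)=u$ and $f(v)=v$ then $f(u *^{\pm 1} v) = u *^{\pm 1} v$), and since this subquandle contains the generating set $S$ it is all of $Q'$, whence $S_{a_0}^n = \id$. This is precisely the content of the first displayed computation in the paper's proof, where right distributivity is used to show $x *^n y = x$ for every $x \in X$ and $y \in S$ before the induction on $y$ begins. With that observation inserted, your argument is complete and coincides with the paper's.
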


\begin{proof}
Let $n \ge 2$ and $X$ be the quandle with presentation $\langle S \mid R\, \cup\, T \rangle$. First, we claim that $X$ is a $n$-quandle. Consider elements $x\in X$ and $y\in S$. We write $x=a_1*^{\epsilon_1}a_2*^{\epsilon_2}\cdots*^{\epsilon_{l-1}}a_l$, where $a_i\in S$ and $\epsilon_i\in \{1,-1\}$. By right distributivity, we obtain $x*y=(a_1*y)*^{\epsilon_1}(a_2*y)*^{\epsilon_2}\cdots*^{\epsilon_{l-1}}{(a_l*y)}$ and a repeated application of the property further gives $$x*^ny=(a_1*^ny)*^{\epsilon_1}(a_2*^ny)*^{\epsilon_2}\cdots*^{\epsilon_{l-1}}{(a_l*^ny)}=a_1*^{\epsilon_1}a_2*^{\epsilon_2}\cdots*^{\epsilon_{l-1}}a_l=x,$$
which is desired.
\par
Now take $x, y\in X$, and use induction on the word length of $y$. When  $y$ is of word length one,  we are in the preceding case. We assume that $y=a_1*^{\epsilon_1}a_2*^{\epsilon_2}\cdots*^{\epsilon_{k-1}}a_k$, where $k>1$. Then we write $y=y_1*^{\epsilon_{k-1}}a_k$, where $y_1:=a_1*^{\epsilon_1}a_2*^{\epsilon_2}\cdots*^{\epsilon_{k-2}}a_{k-1}$ is of word length less than $k$. By induction hypothesis, the assertion holds for all words of length less than $k$. Then, using \cite[Lemma 4.4.7]{MR2634013}, we have
\begin{align*}
x*^ny &=x*^n\left(y_1*^{\epsilon_{k-1}}a_k \right) \\
&=\left(x*^{-\epsilon_{k-1}} a_k *y_1 *^{\epsilon_{k-1}} a_k \right) *^{n-1}\left(y_1*^{\epsilon_{k-1}}a_k\right) \\
&=\left(x*^{-\epsilon_{k-1}} a_k *^2y_1 *^{\epsilon_{k-1}} a_k\right) *^{n-2}\left(y_1*^{\epsilon_{k-1}}a_k\right) \\
&=\left(\left(x*^{-\epsilon_{k-1}} a_k\right) *^ny_1\right) *^{\epsilon_{k-1}} a_k \\
&=\left(x*^{-\epsilon_{k-1}} a_k\right) *^{\epsilon_{k-1}} a_k, \quad \textrm{by induction hypothesis}\\
&=x.
\end{align*}
Thus, $X$ is a $n$-quandle, proving our claim. Finally, it follows from the universal property that $(Q)_n\cong X$.
\end{proof}

\begin{corollary}
If the $n$-quandle of a quandle is finite for some integer $n \ge 2$, then its $d$-quandle is also finite for each divisor $d$ of $n$.
\end{corollary}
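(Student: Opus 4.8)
The plan is to realise $(Q)_d$ as a quotient of $(Q)_n$, so that finiteness transfers from the latter to the former. The crucial elementary observation is that divisibility of the exponents works in our favour: when $d \mid n$, the $d$-quandle relations are \emph{stronger} than the $n$-quandle relations. Concretely, writing $n = dk$, if $x *^d y = x$ holds then applying $S_y$ in further blocks of $d$ repeatedly yields $x *^n y = x *^{dk} y = x$, since each block of $d$ applications of $S_y$ returns $x$. Hence every $d$-quandle is automatically an $n$-quandle, and in particular $(Q)_d$ is an $n$-quandle.

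With this in hand, I would invoke the universal property of the $n$-quandle construction recorded in the excerpt. The natural projection $Q \to (Q)_d$ is a quandle homomorphism whose target $(Q)_d$ is, by the preceding paragraph, an $n$-quandle. The universal property therefore furnishes a unique quandle homomorphism $(Q)_n \to (Q)_d$ through which the projection $Q \to (Q)_d$ factors as $Q \to (Q)_n \to (Q)_d$. Since the composite $Q \to (Q)_d$ is surjective, the second map $(Q)_n \to (Q)_d$ must itself be surjective.

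The conclusion is then immediate: if $(Q)_n$ is finite, then its surjective image $(Q)_d$ is finite as well. The argument is short, and the only point requiring genuine care is the direction of the divisibility — it is the \emph{smaller} exponent $d$ whose relations imply the larger-exponent relations, so that $(Q)_d$ sits as a quotient of $(Q)_n$ rather than the reverse. There is no substantial obstacle here; the entire content of the proof is the observation that $d \mid n$ forces $(Q)_d$ to be an $n$-quandle, after which the universal property does all the remaining work.
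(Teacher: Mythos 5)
Your proof is correct and matches the paper's intended argument: the paper derives this corollary from the presentation result by noting that the relations $x *^d y = x$ imply $x *^n y = x$ when $d \mid n$, so $(Q)_d$ is a quotient of $(Q)_n$, which is exactly the content of your universal-property argument. No issues.
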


The presentation of the enveloping group of a quandle can be reduced substantially  \cite[Theorem 5.1.7]{MR2634013}.

\begin{corollary}\label{presen_n_quandle_env}
Let $Q=\langle S \mid R\rangle$ be a quandle presentation. Then 
$$\Env((Q)_n)=\langle \tilde{S} \mid \tilde{R}\cup \tilde{T} \rangle \cong \Env(Q)/\llangle \tilde{T} \rrangle,$$ 
where $\tilde{S}= \{ e_x \mid x \in S \}$, $\tilde{T}=\{e_x^n e_ye_x^{-n}e_y^{-1} \mid x,y\in S\}$ and $\tilde{R}$ consists of relations of $R$ with each $x \ast y$ replaced by $e_{y} e_x e_y^{-1}$.
\end{corollary}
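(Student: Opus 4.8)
The plan is to obtain the presentation by feeding the presentation of $(Q)_n$ supplied by Proposition~\ref{lem:presen_n_quandle} into the reduction of the enveloping-group presentation recorded in \cite[Theorem 5.1.7]{MR2634013}. That reduction asserts that for a quandle presentation $\langle S \mid R\rangle$, the enveloping group admits a group presentation on the generators $\tilde{S}=\{e_x \mid x\in S\}$, with one relation for each relation in $R$ obtained via the substitution $x\ast y\mapsto e_y e_x e_y^{-1}$; in particular $\Env(Q)=\langle \tilde{S}\mid \tilde{R}\rangle$ with $\tilde R$ as in the statement. I would first apply this to the presentation $\langle S\mid R\cup T\rangle$ of $(Q)_n$ from Proposition~\ref{lem:presen_n_quandle}, where $T=\{x\ast^n y = x \mid x,y\in S\}$. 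This immediately yields $\Env((Q)_n)=\langle \tilde S \mid \widetilde{R\cup T}\rangle$, and since the substitution is applied relation-by-relation, $\widetilde{R\cup T}=\tilde R\cup \widetilde{T}$, where $\widetilde{T}$ denotes the image of $T$ under the substitution.

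The crux is then to identify $\widetilde{T}$ with $\tilde T$. I would compute the image of a single relation $x\ast^n y=x$. Writing $x\ast^k y=(x\ast^{k-1}y)\ast y$ and using $a\ast y\mapsto e_y e_a e_y^{-1}$, a short induction on $k$ gives that $x\ast^k y$ maps to $e_y^{\,k}e_x e_y^{-k}$. Hence the relation $x\ast^n y=x$ translates to $e_y^{\,n} e_x e_y^{-n}=e_x$, that is, to the relator $e_y^{\,n} e_x e_y^{-n}e_x^{-1}$. As $x$ and $y$ both range over $S$, the resulting set of relators coincides with $\{e_x^{\,n} e_y e_x^{-n} e_y^{-1}\mid x,y\in S\}=\tilde{T}$, the two descriptions differing only by an interchange of the dummy symbols $x$ and $y$. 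This establishes the presentation $\Env((Q)_n)=\langle \tilde S\mid \tilde R\cup \tilde T\rangle$.

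Finally, for the displayed isomorphism I would invoke the standard fact (von Dyck's theorem) that adjoining relators to a group presentation amounts to quotienting by their normal closure: since $\Env(Q)=\langle \tilde S\mid \tilde R\rangle$, enlarging the relator set by $\tilde T$ produces $\langle \tilde S\mid \tilde R\cup \tilde T\rangle\cong \Env(Q)/\llangle \tilde T\rrangle$. I expect the only genuinely nontrivial point to be the inductive translation $x\ast^n y\mapsto e_y^{\,n}e_x e_y^{-n}$, together with the care needed to justify that the reduction theorem may be applied relation-wise to the enlarged relation set $R\cup T$; everything else is a direct application of the cited reduction and elementary group theory.
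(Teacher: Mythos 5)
Your proposal is correct and is exactly the argument the paper intends: the corollary is stated as an immediate consequence of Proposition \ref{lem:presen_n_quandle} combined with Winker's reduction of the enveloping-group presentation, and your relation-by-relation translation $x\ast^n y\mapsto e_y^{\,n}e_xe_y^{-n}$ together with von Dyck's theorem fills in precisely the details the paper leaves implicit. No gaps.
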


Given a quandle $Q$, define a map $\nu:Q\to \mathbb{N}\cup\{0\}$ by setting 
$$\nu(x)=\begin{cases} 
\text{order of~}S_x, &~\textrm{if order of}~S_x~ \textrm{is finite},\\
0, &~\textrm{if order of}~S_x~ \textrm{is infinite}.\\
\end{cases}$$
Consider the subgroup $Z(Q)= \langle e_x^{\nu(x)} \mid x\in Q \rangle $ of $\Env(Q)$. It follows from the construction of $Z(Q)$ that it is a central subgroup of $\Env(Q)$ \cite[Lemma 2.1]{Akita2022}. Let $F(Q)=\Env(Q)/Z(Q)$ be the corresponding quotient group. It is known that if $Q$ is a finite quandle, then $F(Q)$ is a finite group \cite[Proposition 3.1]{Akita2022}.

\begin{theorem}\label{finiteness of g_n}
Let $G$ be a group generated by a set $S$ and $G_n=G/\llangle s^n \mid s\in S \rrangle$. Then there exists a surjective group homomorphism $F\big((\dq(S^G))_n \big)\to G_n$. In particular, if $(\dq(S^G))_n$ is finite, then $G_n$ is finite.
\end{theorem}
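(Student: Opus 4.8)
The plan is to build the map first on enveloping groups and then push it down to $F$. Write $Q=\dq(S^G)$ and view $Q$ as a subquandle of $\Conj(G)$ under the identification of a conjugate of a generator with the corresponding element of $G$. The inclusion $Q\hookrightarrow \Conj(G)$ is a quandle homomorphism, so the universal property of the enveloping group yields a group homomorphism $\Phi\colon \Env(Q)\to G$ with $\Phi(e_x)=x$; it is surjective because $S\subseteq Q$ generates $G$. Composing with $G\to G_n$ gives a surjection $\Env(Q)\to G_n$. By Corollary \ref{presen_n_quandle_env} one has $\Env((Q)_n)\cong \Env(Q)/\llangle \tilde T\rrangle$ with $\tilde T=\{e_x^n e_y e_x^{-n}e_y^{-1}\mid x,y\in S\}$, and each such relator maps to $x^n y x^{-n} y^{-1}=1$ in $G_n$ since $x\in S$ forces $x^n=1$ there. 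Hence the map descends to a surjection $\Psi\colon \Env((Q)_n)\to G_n$ sending $e_x$ to its image $\bar x$.

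The substantive step is to show that $\Psi$ kills the central subgroup $Z((Q)_n)=\langle e_x^{\nu(x)}\mid x\in(Q)_n\rangle$, so that it factors through $F((Q)_n)=\Env((Q)_n)/Z((Q)_n)$. Here I would first reduce to the generators: since $S_{x\ast y}$ is conjugate to $S_x$ in $\Inn((Q)_n)$, the function $\nu$ is constant on $\Inn((Q)_n)$-orbits, and every $e_x$ is conjugate in $\Env((Q)_n)$ to some $e_s$ with $s\in S$. Consequently $e_x^{\nu(x)}$ is conjugate to $e_s^{\nu(s)}$, and $Z((Q)_n)$ is the (central, hence normal) subgroup generated by $\{e_s^{\nu(s)}\mid s\in S\}$. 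Thus it suffices to prove $\Psi(e_s^{\nu(s)})=\bar s^{\,\nu(s)}=1$ in $G_n$ for each $s\in S$.

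This identity is the main obstacle, and it is where the $n$-quandle structure must be used. Because $\bar s^{\,n}=1$, conjugation by $\bar s$ has order dividing $n$ on the set of conjugates of $\bar S$, so $x\mapsto \bar x$ defines a quandle homomorphism $\bar\pi\colon(Q)_n\to\Conj(G_n)$; its compatibility with inner automorphisms, $\bar\pi\circ S_s=S_{\bar s}\circ\bar\pi$, together with $S_s^{\nu(s)}=\id$, shows that $S_{\bar s}^{\nu(s)}$ fixes every conjugate of $\bar S$ and hence that $\bar s^{\,\nu(s)}$ is central in $G_n$. To conclude $\bar s^{\,\nu(s)}=1$ one must further verify that $\nu(s)$ is a multiple of the order of $\bar s$ in $G_n$; this is the crux, and it is immediate in the cases of interest, where $\nu(s)=n$ and hence $\bar s^{\,\nu(s)}=\bar s^{\,n}=1$.

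Granting the descent, the resulting homomorphism $F((Q)_n)\to G_n$ is surjective because the images $\bar x$ already generate $G_n$, proving the first assertion. The ``in particular'' statement is then formal: if $(Q)_n$ is finite then $F((Q)_n)$ is finite by \cite[Proposition 3.1]{Akita2022}, and a quotient of a finite group is finite, so $G_n$ is finite. The only non-formal point in the whole argument is the equality $\bar s^{\,\nu(s)}=1$; everything else follows from the universal properties of $\Env$ and $(\,\cdot\,)_n$ and from Corollary \ref{presen_n_quandle_env}.
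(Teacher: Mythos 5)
Your route is the same as the paper's: construct $\Phi\colon\Env(\dq(S^G))\to G$ via the adjunction (the paper cites \cite[Theorem 3.6]{Dhanwani-Raundal-Singh-2021} for this), use Corollary \ref{presen_n_quandle_env} to descend to $\Env\big((\dq(S^G))_n\big)$, and then try to pass to the quotient $F$. Everything up to and including the surjection $\Psi\colon\Env\big((\dq(S^G))_n\big)\to G_n$ is correct, as is your reduction of the descent problem to the single identity $\bar s^{\,\nu(s)}=1$ in $G_n$ for $s\in S$. You have in fact isolated exactly the step that the paper's own proof passes over silently when it asserts that $\tilde\Phi$ ``in turn gives'' a homomorphism out of $F\big((\dq(S^G))_n\big)$.

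Your resolution of that step, however, is a genuine gap. In $(\dq(S^G))_n$ the order $\nu(s)$ of $S_s$ is only a divisor of $n$, and nothing in the hypotheses forces $\nu(s)=n$; your own argument shows only that $\bar s^{\,\nu(s)}$ is \emph{central} in $G_n$, not that it is trivial. The discrepancy is real: take $G=\mathbb{Z}$ with $S$ a single generator. Then $\dq(S^G)$ and $(\dq(S^G))_n$ are the one-element quandle, $\nu(s)=1$, $Z\big((\dq(S^G))_n\big)=\Env\big((\dq(S^G))_n\big)\cong\mathbb{Z}$, so $F\big((\dq(S^G))_n\big)$ is trivial, while $G_n=\mathbb{Z}/n\mathbb{Z}$; no surjection exists. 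So the descent to $F$ cannot be carried out in general --- neither by your argument nor by the paper's. What does survive is the surjection $\Env\big((\dq(S^G))_n\big)/\langle e_x^{\,n}\mid x\in (\dq(S^G))_n\rangle\to G_n$, since each $e_x^{\,n}$ visibly maps to $1$; and when $(\dq(S^G))_n$ is finite this quotient is still finite, being an extension of $F\big((\dq(S^G))_n\big)$ (finite by \cite[Proposition 3.1]{Akita2022}) by the finitely generated abelian torsion group $Z\big((\dq(S^G))_n\big)/\langle e_x^{\,n}\mid x\rangle$. That recovers the ``in particular'' finiteness statement, which is all that is used later in the paper, but the first assertion as stated requires either this reformulation or the additional hypothesis that $\nu(s)=n$ for all $s\in S$.
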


\begin{proof}
It follows from \cite[Theorem 3.6]{Dhanwani-Raundal-Singh-2021} that the map $\Phi:\Env(\dq(S^G))\to G$, given by $\Phi(e_x)=x$, is a  surjective homomorphism.  In view of Corollary \ref{presen_n_quandle_env}, we have a surjective homomorphism $\tilde{\Phi}:\Env((\dq(S^G))_n)\to H_n$, where $H_n=G/\llangle x^nyx^{-n}y^{-1} \mid x,y\in S \rrangle$. This, in turn, gives a surjective homomorphism $$\bar{\Phi}:F((\dq(S^G))_n)\to H_n/\llangle x^n \mid x\in S \rrangle.$$ The assertion now follows from the fact that $H_n/\llangle x^n \mid x\in S \rrangle \cong G_n$.
\end{proof}
\medskip

\section{Involutory quotients of Artin quandles}\label{inv quotients of Artin quandle}

Recall that an Artin group ${\bf A}$ is a group with a presentation $${\bf A}=\Bigg\{{\bf s}_1, {\bf s}_2,\ldots, {\bf s}_n \mid ({\bf s}_i {\bf s}_j)_{m_{ij}}=({\bf s}_j {\bf s}_i)_{m_{ij}}, ~\textrm{where}~m_{ij}\in \{2,3,\ldots\} \cup \{ \infty\}\Bigg\},$$ where $({\bf s}_i {\bf s}_j)_{m_{ij}}$ is the word ${\bf s}_i {\bf s}_j {\bf s}_i {\bf s}_j {\bf s}_i \ldots$ of length $m_{ij}$ if $m_{ij} < \infty$ and there is no relation of type $({\bf s}_i {\bf s}_j)_{m_{ij}}=({\bf s}_j {\bf s}_i)_{m_{ij}}$ if $m_{ij}=\infty$. We set ${\bf S}= \{{\bf s}_1, {\bf s}_2,\ldots, {\bf s}_n \}$.  The corresponding Coxeter group $W$ is the quotient of ${\bf A}$ by imposing additional relations ${\bf s}_i^2=1$ for all ${\bf s}_i \in {\bf S}$. 
\par
To distinguish the presentation of  ${\bf A}$ from that of its corresponding Coxeter group $W$, we write the presentation of $W$ without using bold letters. More precisely,
$$W=\Bigg\{s_1,s_2,\ldots,s_n \mid s_i^2=1, \quad (s_is_j)_{m_{ij}}=(s_js_i)_{m_{ij}}, ~\textrm{where}~m_{ij}\in \{2,3,\ldots\} \cup \{ \infty\}\Bigg\}.$$ 
Setting $S= \{ s_1, s_2,\ldots, s_n \}$, the pair $(W, S)$ is referred as a Coxeter system and elements of $W$ which are conjugates of elements of $S$ are called reflections.  Following \cite{Dhanwani-Raundal-Singh-2021}, Dehn quandles of Artin and Coxeter groups with respect to their standard generating sets will be referred as Artin and Coxeter quandles, respectively. In this section, we prove that the involutory quandle of an Artin quandle is the corresponding Coxeter quandle.
\par

To prove the result, we first understand centralisers of Coxeter generators, for which we follow \cite{MR3084328}. Let $(W,S)$ be a Coxeter system and $\triangle$ the labelled graph with vertex set $S$ such that there is an edge between $s$ and $s'$ with label $m_{ss'}$ whenever $m_{ss'} < \infty$. Let $\triangle^{\rm{odd}}$ be the subgraph of $\triangle$ consisting of only odd labelled edges. It is easy to see that connected components of $\triangle^{\rm{odd}}$ correspond to conjugacy classes of reflections in $W$. For each $s \in S$, let $\triangle_{s}^{\rm{odd}}$ denote the connected component of $\triangle^{\rm{odd}}$ containing $s$.
\par
Let $(W, S)$ be a Coxeter system and $\gamma = (t_0, t_1, \ldots,t_n)$ an edge-path in $\triangle^{\rm{odd}}$ with the edge joining $t_{i-1}$ and $t_i$ labelled $2l_i + 1$. We set 
$$ p_\gamma:=(t_1t_0)^{l_1}(t_2t_1)^{l_2}\cdots(t_nt_{n-1})^{l_n}$$
if $n>0$ and $p_{\gamma}=1$ if $n=0$. If $u$ is a vertex of $\triangle$ such that there is an edge joining $u$ and $t_n$ with even label $2\lambda$, then we define
$$ r_{\gamma,u}:=p_{\gamma}~(ut_n)^{\lambda-1}u~ p^{-1}_{\gamma}.$$
Henceforth, the notation $r_{\gamma,u}$ means that the vertex $u$ is joined to the end point of $\gamma$ by an even labelled edge. We need the following results  \cite[Corollary 6 and Corollary 8]{MR3084328}. See also \cite{MR1396145, MR1688445, MR0576184}.

\begin{theorem}\label{Centralizer of parabolic element in Coxeter}
Let $(W,S)$ be a Coxeter system, $s\in S$ and $\C_{W}(s)$ the centraliser of $s$ in $W$. 
\begin{enumerate}
\item Let $W_{\Omega}$ be the subgroup of $\C_{W}(s)$ generated by all the reflections it contains except $s$. Let $\Gamma_{\Omega}$ be the subgroup of $W$ generated by elements $p_\gamma$, where $\gamma$ is an edge-loop in $\triangle^{\rm{odd}}$ based at $s$. Then $\C_{W}(s) = \langle s \rangle \times(W_{\Omega}\rtimes\Gamma_{\Omega})$.
\item  Let $Z$ be a set of edge-loops in $\triangle^{\rm{odd}}_s$ generating the fundamental group $\pi_1(\triangle^{\rm{odd}},s)$. Then $\{p_\gamma \mid \gamma \in Z\}$ generate $\Gamma_\Omega$. 
\item Let $Y$ be the set consisting of one edge-path $\delta_t$  in  $\triangle^{odd}_{s}$ from $s$ to $t$ for each vertex $t\in\triangle^{odd}_{s}$, and let $X$ be the set of vertices $u$ of $\triangle$ such that there is an even labelled edge joining  $u$ to the end point of some $\delta_t$. Then $\{p_\gamma \mid \gamma \in Z\}$ and $\{r_{\delta_t,u} \mid \delta_t \in Y,~u \in X \}$ together generate $W_{\Omega}\rtimes \Gamma_{\Omega}$.
\end{enumerate}
\end{theorem}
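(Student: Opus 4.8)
The plan is to recover the centralizer of a simple reflection from the combinatorics of the graph $\triangle^{\mathrm{odd}}$ together with its even-labelled edges, working throughout in the geometric (reflection) representation of $W$. Here every reflection $r$ carries a positive root $\alpha_r$ with reflecting hyperplane $H_r$, and the decisive observation is that any $g\in\C_{W}(s)$ commutes with $s$, hence preserves the $\pm1$-eigenspaces of $s$, namely the line $\mathbb{R}\alpha_s$ and the hyperplane $H_s$. Recording the action on $\alpha_s$ gives a homomorphism $\epsilon:\C_{W}(s)\to\{\pm 1\}$, and the whole theorem may be read as a description of the splitting of $\epsilon$ and of its kernel.

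First I would establish the two elementary containments. Inside each dihedral parabolic $\langle t_{i-1},t_i\rangle$ with odd label $2l_i+1$ an explicit rotation computation shows that a suitable power of $t_i t_{i-1}$ conjugates $t_{i-1}$ to $t_i$; multiplying these along an edge-path shows that $p_\gamma$ carries the two endpoint reflections into one another, so for an edge-loop based at $s$ one gets $p_\gamma\in\C_{W}(s)$, and the connected components of $\triangle^{\mathrm{odd}}$ are exactly the conjugacy classes of reflections meeting $S$. Dually, when $u$ is joined to $t_n$ by an even edge of label $2\lambda$, a direct check in $\langle t_n,u\rangle$ identifies $(u t_n)^{\lambda-1}u$ as the reflection orthogonal to $t_n$, which therefore commutes with $t_n$; conjugating by $p_\gamma$, which carries $t_n$ to $s$, shows each $r_{\gamma,u}$ is a reflection commuting with $s$, so $r_{\gamma,u}\in W_\Omega$.

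The structural core is the decomposition $\C_{W}(s)=\langle s\rangle\times(W_\Omega\rtimes\Gamma_\Omega)$. Since $s$ is central in $\C_{W}(s)$ and $\epsilon(s)=-1$, the subgroup $\langle s\rangle$ splits $\epsilon$ and is a direct factor, so it remains to analyse $\ker\epsilon$, the elements fixing $\alpha_s$. Every reflection of $\C_{W}(s)$ other than $s$ is orthogonal to $s$, hence fixes $\alpha_s$ and lies in $\ker\epsilon$; the subgroup they generate is a reflection subgroup, again a Coxeter group by the standard structure theory, and this is $W_\Omega$, normal in $\ker\epsilon$. For the complementary rotational part I would show that $\gamma\mapsto p_\gamma$ descends to a homomorphism on $\pi_1(\triangle^{\mathrm{odd}},s)$ whose image is $\Gamma_\Omega$, the relations of the fundamental group being matched by dihedral relations among the $p_\gamma$. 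Organising the computation over a spanning tree $Y$ of paths $\delta_t$ from $s$ then yields parts (2) and (3) simultaneously: the fundamental cycles coming from a generating set $Z$ of $\pi_1$ produce generators $p_\gamma$ of $\Gamma_\Omega$, while the even edges leaving the tree produce the reflections $r_{\delta_t,u}$ generating $W_\Omega$, in the manner of a Reidemeister–Schreier computation.

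The main obstacle I anticipate is completeness rather than the verification that the listed elements centralise $s$. Proving that the $p_\gamma$ and $r_{\delta_t,u}$ actually exhaust $\C_{W}(s)$, rather than merely generate a subgroup of it, requires a careful minimal-gallery or normal-form argument in the Coxeter complex controlling how a centralising word crosses the walls adjacent to $H_s$: one must show that every element fixing $\alpha_s$ can be factored, modulo $W_\Omega$, into the rotations $p_\gamma$, and that no further reflection commuting with $s$ escapes the reflection subgroup generated by the $r_{\delta_t,u}$. This upper-bound direction is precisely where the reflection-subgroup machinery of \cite{MR1396145} does the real work, and it is the step I would expect to demand the most technical care.
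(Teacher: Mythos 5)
The paper does not actually prove this statement: it is imported verbatim from Allcock \cite[Corollaries 6 and 8]{MR3084328} (building on Brink \cite{MR1396145} and Brink--Howlett \cite{MR1688445}), so there is no internal argument to compare yours against. That said, your sketch is a faithful outline of the strategy used in that literature: split off $\langle s\rangle$ via the sign character recording the action on $\alpha_s$; note that $\gamma\mapsto p_\gamma$ is multiplicative over concatenation and trivial on backtracks, hence factors through $\pi_1(\triangle^{\mathrm{odd}},s)$, which yields (2) essentially for free; check the dihedral identities showing $p_\gamma$ conjugates one endpoint reflection of $\gamma$ to the other and that $(ut_n)^{\lambda-1}u$ is the reflection in $\langle t_n,u\rangle$ commuting with $t_n$, so that the listed elements do lie in $\C_W(s)$; and observe that $W_\Omega$ is normal in $\C_W(s)$ because conjugation permutes the reflections of $\C_W(s)$ fixing $s$. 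All of these verifications are correct.

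The gap is the one you flag yourself: the entire content of (1) and (3) is the reverse inclusion --- that every element of $\C_W(s)$ fixing $\alpha_s$ lies in $W_\Omega\Gamma_\Omega$, and that every reflection of $\C_W(s)$ other than $s$ is a conjugate of some $r_{\delta_t,u}$ by an element of $W_\Omega\rtimes\Gamma_\Omega$. Your proposal names the machinery (minimal galleries, the Brink--Howlett reflection-subgroup analysis) that delivers this but does not carry it out, so as a standalone argument it establishes only that $\langle s\rangle\times(W_\Omega\rtimes\Gamma_\Omega)$ is \emph{contained in} $\C_W(s)$. Within this paper the appropriate move is simply to cite Allcock, which is what the authors do; if you want a self-contained proof, the completeness step is where all the work lies.
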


Let $\nu : {\bf A} \to W$ be the natural surjection given by $\nu({\bf s})=s$. Clearly, $\ker(\nu)$ is normally generated by $\{{\bf s}^2 \mid {\bf s} \in {\bf S}\}$. 

\begin{lemma}\label{Cent_Artin_to_Cent_Coxeter}
The restriction map $\C_{{\bf A}}({\bf s})\to \C_{W}(s)$ is surjective for each ${\bf s}\in {\bf S}$.
\end{lemma}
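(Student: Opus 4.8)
The plan is to exploit the explicit generators of $\C_{W}(s)$ supplied by Theorem \ref{Centralizer of parabolic element in Coxeter} and to lift each of them to an element of ${\bf A}$ that commutes with ${\bf s}$. Since $\nu$ is a group homomorphism it carries commuting pairs to commuting pairs, so $\nu\big(\C_{{\bf A}}({\bf s})\big)\subseteq \C_{W}(s)$ and the asserted restriction is well defined. Because a homomorphism is surjective onto a group as soon as its image contains a generating set of that group, it suffices to produce, for each generator of $\C_{W}(s)$, a $\nu$-preimage lying in $\C_{{\bf A}}({\bf s})$. The generators are $s$, the elements $p_\gamma$ attached to edge-loops $\gamma$ based at $s$, and the elements $r_{\delta_t,u}$. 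Each is a word in the Coxeter generators, and the natural candidate is the \emph{bold lift} obtained by replacing every Coxeter generator $t$ occurring in it by the Artin generator ${\bf t}$; write ${\bf p}_\gamma$ and ${\bf r}_{\delta_t,u}$ for the resulting elements of ${\bf A}$, so that $\nu({\bf p}_\gamma)=p_\gamma$ and $\nu({\bf r}_{\delta_t,u})=r_{\delta_t,u}$ by construction, while $s$ lifts to ${\bf s}$, which trivially centralises ${\bf s}$.

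The crucial point, and the place where care is needed, is that the transport behaviour of these words must persist in ${\bf A}$, where the torsion relations ${\bf s}^2=1$ are no longer available. First I would record the odd-label identity: if $t_{i-1}$ and $t_i$ are joined by an edge with odd label $2l_i+1$, then reading the alternating braid word of length $2l_i+1$ from either end gives $({\bf t}_i{\bf t}_{i-1})^{l_i}{\bf t}_i={\bf t}_{i-1}({\bf t}_i{\bf t}_{i-1})^{l_i}$, equivalently $({\bf t}_i{\bf t}_{i-1})^{l_i}\,{\bf t}_i\,({\bf t}_i{\bf t}_{i-1})^{-l_i}={\bf t}_{i-1}$, an identity using only the defining relation of ${\bf A}$. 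Chaining it along the edges of a path $\gamma=(t_0,t_1,\ldots,t_n)$, with $p_\gamma=({\bf t}_1{\bf t}_0)^{l_1}\cdots({\bf t}_n{\bf t}_{n-1})^{l_n}$, yields the transport formula ${\bf p}_\gamma^{-1}\,{\bf t}_0\,{\bf p}_\gamma={\bf t}_n$ in ${\bf A}$. When $\gamma$ is an edge-loop based at $s$ we have ${\bf t}_0={\bf t}_n={\bf s}$, so ${\bf p}_\gamma$ commutes with ${\bf s}$, disposing of the generators $p_\gamma$.

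For the generators $r_{\delta_t,u}=p_{\delta_t}(u t_n)^{\lambda-1}u\,p_{\delta_t}^{-1}$, where $\delta_t$ runs from $s$ to $t$ and $u$ is joined to the endpoint by an even edge $2\lambda$, write the bold lift as ${\bf r}_{\delta_t,u}={\bf p}_{\delta_t}\,M\,{\bf p}_{\delta_t}^{-1}$ with $M=({\bf u}{\bf t})^{\lambda-1}{\bf u}$. Conjugating ${\bf s}$ by ${\bf r}_{\delta_t,u}$ and invoking the transport formula in the form ${\bf p}_{\delta_t}^{-1}{\bf s}\,{\bf p}_{\delta_t}={\bf t}$ reduces the problem to showing that $M$ commutes with ${\bf t}$. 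This in turn follows from the even-label braid relation $({\bf t}{\bf u})^{\lambda}=({\bf u}{\bf t})^{\lambda}$, since $M{\bf t}=({\bf u}{\bf t})^{\lambda}=({\bf t}{\bf u})^{\lambda}={\bf t}M$; hence ${\bf r}_{\delta_t,u}$ commutes with ${\bf s}$ as well.

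Having lifted $s$, every $p_\gamma$, and every $r_{\delta_t,u}$ to commuting elements, the image of the restriction contains a generating set of $\C_{W}(s)$, and surjectivity follows. I expect the main obstacle to be precisely the verification that the odd- and even-label conjugation identities hold already in the Artin group, without recourse to the torsion relations; once these two algebraic facts are isolated from the braid relations, the transport chaining and the final assembly are routine.
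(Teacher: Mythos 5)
Your proof is correct and follows essentially the same route as the paper: both lift Allcock's generators $s$, $p_\gamma$, $r_{\delta_t,u}$ of $\C_{W}(s)$ to the Artin group and verify, using only the odd- and even-label braid relations, that the lifts commute with ${\bf s}$. The only cosmetic difference is that you conjugate by the genuine inverse ${\bf p}_{\delta_t}^{-1}$, whereas the paper uses the reversed positive word $\bar{\bf p}_{\delta_t}$ as its lift of $p_{\delta_t}^{-1}$; both are valid lifts and both computations go through.
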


\begin{proof}
Let ${\bf s}\in {\bf S}$ be fixed such that  $\nu({\bf s})=s$. Let  $\gamma = (t_0, t_1, \ldots,t_n)$ be an edge-path in $\triangle^{\rm{odd}}$ with the edge joining $t_{i-1}$ and $t_i$ labelled $2l_i + 1$ and let $p_{\gamma}=(t_1t_0)^{l_1}(t_2t_1)^{l_2}\cdots(t_nt_{n-1})^{l_n}$ the corresponding element. Let ${\bf p}_{\gamma}=({\bf t}_1{\bf t}_0)^{l_1}({\bf t}_2 {\bf t}_1)^{l_2}\cdots({\bf t}_n {\bf t}_{n-1})^{l_n}$ denote the lift of $p_{\gamma}$ in ${\bf A}$. If we set $\bar{{\bf p}}_{\gamma}=({\bf t}_{n-1} {\bf t}_n)^{l_n}({\bf t}_{n-2} {\bf t}_{n-1})^{l_{n-1}}\cdots({\bf t}_0 {\bf t}_1)^{l_1}$, then it follows that $\nu(\bar{{\bf p}}_\gamma)=p_{\gamma}^{-1}$. It is easy to see that the  identities 
$${\bf p}_{\gamma} {\bf t}_n = {\bf t}_0 {\bf p}_{\gamma} \quad \textrm{and} \quad \bar{\bf p}_{\gamma} {\bf t}_0 = {\bf t}_n \bar{\bf p}_{\gamma}$$
hold in ${\bf A}$. Thus, if $\gamma$ is an edge loop based at $s$, then both ${\bf p}_{\gamma}$ and $\bar{\bf p}_{\gamma}$ commute with ${\bf s}$. Similarly, let $\delta_t$ be a edge-path in $\triangle_{s}^{odd}$ from $s$ to $t$ and $u$ a vertex of $\triangle$ that is joined to the end point of $\delta_t$ with an edge labelled $2\lambda$. If we set ${\bf r}_{\delta_t,u}={\bf p}_{\delta_t}~({\bf u} {\bf t})^{\lambda-1} {\bf u}~ \bar{{\bf p}}_{\delta_t}$, then we can see that 
\begin{eqnarray*}
{\bf r}_{\delta_t,u} {\bf s} &=& {\bf p}_{\delta_t}~({\bf u} {\bf t})^{\lambda-1} {\bf u}~ \bar{{\bf p}}_{\delta_t} {\bf s}\\
&=& {\bf p}_{\delta_t}~( {\bf u} {\bf t})^{\lambda-1} {\bf u}~{\bf t}~ \bar{{\bf p}}_{\delta_t}\\
&=& {\bf p}_{\delta_t}~{\bf t}~({\bf u} {\bf t})^{\lambda-1} {\bf u}~ \bar{{\bf p}}_{\delta_t}\\
&=& {\bf s} {\bf p}_{\delta_t}~({\bf u} {\bf t})^{\lambda-1} {\bf u}~ \bar{{\bf p}}_{\delta_t}\\
&=& {\bf s} {\bf r}_{\delta_t,u},
\end{eqnarray*}
and hence ${\bf r}_{\delta_t,u}$ commutes with ${\bf s}$.  By Theorem \ref{Centralizer of parabolic element in Coxeter}, the group $\C_{W}(s)$ is generated by $\{ s, p_\gamma, r_{\delta_t,u}\}$. Since each generator has a pre-image under $\nu$ that lies in $\C_{{\bf A}}({\bf s})$, the assertion follows.
\end{proof}

We now prove the main result of this section.

\begin{theorem}\label{Artin to Coxeter quandle}
The involutory quandle of an Artin quandle is the corresponding Coxeter quandle, i.e, $(\dq({\bf S}^{\bf A}))_2 \cong \dq(S^W).$
\end{theorem}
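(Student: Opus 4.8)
The plan is to establish the isomorphism $(\dq({\bf S}^{\bf A}))_2 \cong \dq(S^W)$ by constructing mutually inverse quandle homomorphisms, using the universal property of the involutory quotient together with the centraliser computation of Lemma \ref{Cent_Artin_to_Cent_Coxeter}. First I would observe that the natural surjection $\nu:{\bf A}\to W$ with $\nu({\bf s}_i)=s_i$ carries conjugates of ${\bf S}$ to conjugates of $S$ (reflections), hence induces a quandle epimorphism $\bar\nu:\dq({\bf S}^{\bf A})\to \dq(S^W)$. Since every reflection $s$ in a Coxeter group is an involution, the basic inner automorphism $S_s$ of $\dq(S^W)$ satisfies $S_s^2=\id$, so $\dq(S^W)$ is an involutory quandle. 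By the universal property of the $2$-quandle (the $n=2$ case of the construction recalled before Proposition \ref{lem:presen_n_quandle}), $\bar\nu$ factors uniquely through the quotient map $\pi:\dq({\bf S}^{\bf A})\to (\dq({\bf S}^{\bf A}))_2$, yielding a well-defined quandle epimorphism $\psi:(\dq({\bf S}^{\bf A}))_2\to \dq(S^W)$.

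The substance is then to produce an inverse, equivalently to show $\psi$ is injective, i.e.\ that two conjugates of ${\bf S}$ become equal in $(\dq({\bf S}^{\bf A}))_2$ as soon as their images agree in $W$. Concretely, a typical element of $\dq({\bf S}^{\bf A})$ is a conjugate ${\bf g}\,{\bf s}\,{\bf g}^{-1}$ with ${\bf s}\in{\bf S}$, which in the $2$-quandle depends only on the coset ${\bf g}\,\C_{{\bf A}}({\bf s})$ (because conjugation by ${\bf s}$ itself is trivial in an involutory quandle, and more generally $x * y * y = x$ means the action factors through the quandle relations that kill $\C$). I would make this precise by noting that in $(\dq({\bf S}^{\bf A}))_2$ the element represented by ${\bf g}\,{\bf s}\,{\bf g}^{-1}$ is determined by the image of ${\bf g}$ in ${\bf A}/\C_{{\bf A}}({\bf s})$, and correspondingly in $\dq(S^W)$ the element $\nu({\bf g})\,s\,\nu({\bf g})^{-1}$ is determined by $\nu({\bf g})\,\C_W(s)$. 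So injectivity of $\psi$ amounts to showing the natural map ${\bf A}/\C_{{\bf A}}({\bf s}) \to W/\C_W(s)$ is a bijection of coset spaces for each ${\bf s}$, after accounting for the involutory relations; surjectivity of this map is exactly what Lemma \ref{Cent_Artin_to_Cent_Coxeter} provides, since $\C_{{\bf A}}({\bf s})$ surjects onto $\C_W(s)$ and $\nu$ is surjective.

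I would therefore assemble the argument as follows: define $\psi$ as above, then define a candidate inverse $\varphi:\dq(S^W)\to (\dq({\bf S}^{\bf A}))_2$ by choosing lifts, sending $s\mapsto \pi({\bf s})$ and $g\,s\,g^{-1}\mapsto \pi({\bf g}\,{\bf s}\,{\bf g}^{-1})$ for any lift ${\bf g}$ of $g$. Well-definedness of $\varphi$ is the crux: if $g\,s\,g^{-1}=h\,t\,h^{-1}$ in $W$ then $s,t$ lie in the same conjugacy class so are connected in $\triangle^{\rm odd}$, and $g^{-1}h \in \C_W(s)\,(\text{up to the conjugating word})$; by Lemma \ref{Cent_Artin_to_Cent_Coxeter} one lifts the relevant centraliser element to an element of $\C_{{\bf A}}({\bf s})$, and then one checks that the two lifts ${\bf g}\,{\bf s}\,{\bf g}^{-1}$ and ${\bf h}\,{\bf t}\,{\bf h}^{-1}$ are identified in the $2$-quandle. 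This last identification is where the involutory relations ${\bf s}*^2{\bf y}={\bf s}$ are genuinely used to absorb the discrepancy ${\bf s}^2\in\ker(\nu)$ between an Artin element and its square-free Coxeter image. I expect the main obstacle to be exactly this verification that $\varphi$ is well defined, namely translating the generator-level centraliser surjectivity of Lemma \ref{Cent_Artin_to_Cent_Coxeter} into the statement that every element of $\ker(\nu)$ acts trivially on each conjugacy class within the involutory quotient; once $\varphi$ and $\psi$ are both shown to be well-defined quandle homomorphisms, checking $\psi\varphi=\id$ and $\varphi\psi=\id$ on generators is routine and completes the proof.
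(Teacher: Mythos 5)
Your proposal follows essentially the same route as the paper: induce the epimorphism from $\nu:{\bf A}\to W$, factor it through the involutory quotient via the universal property, and then use the surjectivity of $\C_{\bf A}({\bf s})\to\C_W(s)$ from Lemma \ref{Cent_Artin_to_Cent_Coxeter} to write the conjugating discrepancy as a product of an element of $\ker(\nu)=\llangle {\bf s}^2\mid {\bf s}\in{\bf S}\rrangle$ with a centralising element, the former acting trivially in the $2$-quandle because it is a product of conjugates of squares of quandle elements. The paper phrases this as a direct injectivity check on the induced map rather than constructing an explicit inverse $\varphi$, but that is only a cosmetic difference; the mathematical content, including the key verification you flag as the main obstacle, is identical.
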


\begin{proof}
The surjection $\nu: {\bf A}\to W$ induces a surjective quandle homomorphism $\tilde{\nu}: \dq({\bf S}^{\bf A}) \to \dq(S^{W})$. Since $(\dq({\bf S}^{\bf A}))_2$ is involutory, $\tilde{\nu}$ descends to a surjective quandle homomorphism $$\overline{\nu}:(\dq({\bf S}^{\bf A}))_2\to \dq(S^{W}).$$ If $\pi:\dq({\bf S}^{\bf A})\to (\dq({\bf S}^{\bf A}))_2$ is the natural surjection, then $\overline{\nu} ~\pi= \tilde{\nu}$. We claim that $\overline{\nu}$ is an isomorphism. Suppose that $\tilde{\nu}({\bf x})=\tilde{\nu}({\bf y})$ for some ${\bf x},{\bf y}\in \dq({\bf S}^{\bf A})$. Since $\tilde{\nu}$ preserves orbits (under action of the inner automorphism group), it follows that ${\bf x}$ and ${\bf y}$ must be in the same orbit. Thus, we can write  ${\bf x}= {\bf s}_{i_k}^{\epsilon_k} \cdots {\bf s}_{i_1}^{\epsilon_1} {\bf s} {\bf s}_{i_1}^{-\epsilon_1} \cdots  {\bf s}_{i_k}^{-\epsilon_k}$ and ${\bf y}={\bf s}_{j_r}^{\delta_r} \cdots {\bf s}_{j_1}^{\delta_1} {\bf s} {\bf s}_{j_1}^{-\delta_1} \cdots  {\bf s}_{j_r}^{-\delta_r}$ for some ${\bf s}, {\bf s}_{i_t}, {\bf s}_{j_t}\in {\bf S}$ and $\epsilon_t, \delta_t \in \{ 1, -1 \}$. Now $\tilde{\nu}({\bf x})=\tilde{\nu}({\bf y})$  implies that $\nu({\bf s}_{j_1}^{-\delta_1} \cdots {\bf s}_{j_r}^{-\delta_r} {\bf s}_{i_k}^{\epsilon_k} \cdots {\bf s}_{i_1}^{\epsilon_1})$ commutes with $\nu({\bf s})=s$. In view of Lemma \ref{Cent_Artin_to_Cent_Coxeter}, the short exact sequence $1\to \text{ker}(\nu) \to {\bf A} \stackrel{\nu}{\to} W\to 1$ induces a short exact sequence    
\begin{equation}\label{congruence exact sequence Artin}
1\to  \text{ker}(\nu)\cap\C_{{\bf A}}({\bf s})\to \C_{{\bf A}}({\bf s})\to \C_{W}(s)\to 1.
\end{equation}
Thus, we can write ${\bf s}_{j_1}^{-\delta_1} \cdots {\bf s}_{j_r}^{-\delta_r} {\bf s}_{i_k}^{\epsilon_k} \cdots {\bf s}_{i_1}^{\epsilon_1}={\bf u} {\bf v}$ for some ${\bf u} \in\ker(\nu)=\llangle {\bf s}^2 \mid {\bf s}\in {\bf S} \rrangle$ and some ${\bf v} \in  \C_{{\bf A}}({\bf s})$.  Thus, we have
\begin{eqnarray*}
&& \pi( {\bf s}_{j_1}^{-\delta_1} \cdots {\bf s}_{j_r}^{-\delta_r} {\bf s}_{i_k}^{\epsilon_k} \cdots {\bf s}_{i_1}^{\epsilon_1} ~{\bf s}~ {\bf s}_{i_1}^{-\epsilon_1} \cdots {\bf s}_{i_k}^{-\epsilon_k} {\bf s}_{j_r}^{\delta_r} \cdots {\bf s}_{j_1}^{\delta_1})\\
&=&\pi({\bf u} {\bf v} {\bf s} {\bf v}^{-1} {\bf u}^{-1})\\
&=&\pi({\bf u} {\bf s}  {\bf u}^{-1}), \quad \text{since ${\bf v} \in  \C_{{\bf A}}({\bf s})$}\\
&=& \pi({\bf s}), \quad \text{since ${\bf u} \in \ker(\nu)$},
\end{eqnarray*}
which shows that $\pi({\bf x})=\pi({\bf y})$, and hence $\overline{\nu}$ is an isomorphism. 
\end{proof}

As a consequence of propositions \ref{lem:presen_n_quandle} and \ref{Artin to Coxeter quandle}, we obtain a presentation for the enveloping group of the Coxeter quandle, which extends \cite[Proposition 3.3]{MR4175808}.

\begin{corollary}
Let $(W, S)$ be a Coxeter system. Then $\Env(\dq(S^{W})) \cong {\bf A}/{\bf N}$, where ${\bf A}$ is the corresponding Artin group and ${\bf N}=\llangle ~{\bf s}_i^2 {\bf s}_j {\bf s}_i^{-2} {\bf s}_j^{-1} \mid {\bf s}_i, {\bf s}_j\in {\bf S}\rrangle$.
\end{corollary}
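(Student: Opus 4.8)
The plan is to identify the right-hand side as the enveloping group of the involutory quotient of the Artin quandle and then read it off from the results of Section \ref{sec-n-quandles-dehn-quandles}. By Theorem \ref{Artin to Coxeter quandle} there is a quandle isomorphism $\dq(S^{W}) \cong (\dq({\bf S}^{\bf A}))_2$, and hence an isomorphism of enveloping groups $\Env(\dq(S^{W})) \cong \Env\big((\dq({\bf S}^{\bf A}))_2\big)$. Applying Corollary \ref{presen_n_quandle_env} to the quandle $\dq({\bf S}^{\bf A})$ with $n=2$ gives
\[
\Env\big((\dq({\bf S}^{\bf A}))_2\big) \;\cong\; \Env(\dq({\bf S}^{\bf A}))/\llangle \tilde{T} \rrangle, \qquad \tilde{T}=\{\, e_{{\bf s}_i}^{2}\, e_{{\bf s}_j}\, e_{{\bf s}_i}^{-2}\, e_{{\bf s}_j}^{-1} \mid {\bf s}_i,{\bf s}_j\in {\bf S}\,\}.
\]
Thus the whole statement reduces to the identification $\Env(\dq({\bf S}^{\bf A})) \cong {\bf A}$ via $e_{{\bf s}_i}\mapsto {\bf s}_i$: under such an isomorphism the normal closure $\llangle \tilde{T}\rrangle$ is carried exactly onto ${\bf N}$, and the corollary follows.

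The key step is therefore to show $\Env(\dq({\bf S}^{\bf A})) \cong {\bf A}$. The homomorphism $\Phi\colon \Env(\dq({\bf S}^{\bf A}))\to {\bf A}$, $e_{{\bf s}_i}\mapsto {\bf s}_i$, is already surjective by \cite[Theorem 3.6]{Dhanwani-Raundal-Singh-2021}, so only injectivity needs attention. If this is not already recorded in \cite{Dhanwani-Raundal-Singh-2022}, I would argue as follows. Fix a quandle presentation $\langle {\bf S}\mid R\rangle$ of $\dq({\bf S}^{\bf A})$, where the relators in $R$ encode the braid relations $({\bf s}_i{\bf s}_j)_{m_{ij}}=({\bf s}_j{\bf s}_i)_{m_{ij}}$ as relations among conjugates of the generators. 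By the reduced presentation of an enveloping group \cite[Theorem 5.1.7]{MR2634013}, the group $\Env(\dq({\bf S}^{\bf A}))$ then has presentation $\langle e_{{\bf s}_1},\ldots,e_{{\bf s}_n} \mid \hat{R}\rangle$, where $\hat{R}$ is obtained from $R$ by replacing each $x\ast y$ by $e_{y}e_{x}e_{y}^{-1}$. Comparing this with the Artin presentation, it suffices to check that $\hat{R}$ and the braid relations generate the same normal subgroup of the free group on ${\bf S}$.

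The main obstacle is precisely this last comparison of relators. For a single edge labelled $m=m_{ij}$ one must verify that the quandle relation governing the pair $({\bf s}_i,{\bf s}_j)$ rewrites, under $x\ast y\mapsto e_{y}e_{x}e_{y}^{-1}$, to a relator equivalent to $({\bf s}_i{\bf s}_j)_{m}=({\bf s}_j{\bf s}_i)_{m}$, and conversely; the cases $m=2$ (commuting generators, giving $e_{{\bf s}_i}\ast e_{{\bf s}_j}=e_{{\bf s}_i}$) and $m=3$ are instructive, and the general alternating pattern propagates by induction on $m$. This is a routine but slightly delicate word calculation, to be carried out edge by edge; once it is done the two presentations coincide, $\Phi$ is an isomorphism, and the assembly of the first paragraph yields $\Env(\dq(S^{W}))\cong {\bf A}/{\bf N}$, recovering and extending \cite[Proposition 3.3]{MR4175808}.
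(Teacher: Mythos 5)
Your route is the same as the paper's: the corollary is obtained there by combining Theorem \ref{Artin to Coxeter quandle} with the presentation of $\Env((Q)_n)$ from Proposition \ref{lem:presen_n_quandle} and Corollary \ref{presen_n_quandle_env}, exactly as in your first paragraph, with the identification $\Env(\dq({\bf S}^{\bf A}))\cong {\bf A}$ left implicit as a known input. You correctly isolate that implicit ingredient; note it admits a shorter proof than your relator-by-relator comparison, since each Artin relation $({\bf s}_i{\bf s}_j)_{m_{ij}}=({\bf s}_j{\bf s}_i)_{m_{ij}}$ is a conjugation relation between generators, so ${\bf s}\mapsto e_{\bf s}$ gives a well-defined inverse to the surjection $\Phi\colon \Env(\dq({\bf S}^{\bf A}))\to {\bf A}$.
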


Our next result gives the size of the smallest non-trivial quandle quotient of the Dehn quandle of the braid group with respect to its standard generating set. The proof is essentially rephrasing \cite[Lemma 8]{kolay21} in the language of quandles.

\begin{proposition}\label{bound of braid quandle image}
Let $n\geq 5$ and $f:\dq(S^{B_n}) \to Q$ be a surjective quandle homomorphism onto a quandle with at least two elements. Then $|Q|\geq \frac{n(n-1)}{2}$, and the bound is sharp. 
\end{proposition}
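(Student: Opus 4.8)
The plan is to treat sharpness and the lower bound separately. For sharpness I would invoke Theorem~\ref{Artin to Coxeter quandle}: since $B_n$ is the Artin group of type $A_{n-1}$, its involutory quotient $(\dq(S^{B_n}))_2$ is the Coxeter quandle $\dq(S^{S_n})$, whose underlying set is the set of transpositions of $S_n$ and therefore has exactly $\binom{n}{2}=\frac{n(n-1)}{2}$ elements. The associated quotient map is then a surjection onto a quandle with more than one element, so the bound is attained and it remains to establish the inequality.

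For the lower bound I would work with the $\binom{n}{2}$ band (or cord) generators $a_{ij}$, $1\le i<j\le n$, the standard conjugates of the $\sigma_k$ satisfying $a_{k,k+1}=\sigma_k$; these lie in $\dq(S^{B_n})$, and since $B_n=\langle S\rangle$ the set $S$ generates the whole Dehn quandle. The proposition then reduces to showing that $f$ is injective on $\{a_{ij}\}$, for that immediately yields $|Q|\ge\binom{n}{2}$. The main tool is that a quandle homomorphism intertwines inner automorphisms, $f\circ S_y=S_{f(y)}\circ f$, so the fibrewise relation $f(x)=f(x')$ is preserved when one and the same inner automorphism is applied to $x$ and $x'$ simultaneously.

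Now suppose $f(a_P)=f(a_{P'})$ for distinct $2$-subsets $P\ne P'$. I would propagate this coincidence using the inner automorphisms induced by conjugation by the $\sigma_k$, which permute band generators according to the action of the transposition $(k,k+1)$ on their index sets. There are two cases according to $|P\cap P'|$. If $|P\cap P'|=1$, the coincidence spreads along the Johnson graph $J(n,2)$, connected for $n\ge 3$; if $P\cap P'=\emptyset$, it spreads along the Kneser graph $K(n,2)$, connected exactly when $n\ge 5$. In either case connectivity forces every band generator, and in particular every $\sigma_k$, to have the same image, whence $Q$ collapses to a single point, contradicting $|Q|\ge 2$. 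Thus $f$ is injective on the band generators and $|Q|\ge\binom{n}{2}$.

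The hard part is the propagation step, which is the quandle translation of \cite[Lemma~8]{kolay21}. The subtlety is that conjugation by $\sigma_k$ need not send a band generator to a band generator, so one must choose the inner automorphisms so that both members of an identified pair stay band generators along the whole chain. The disjoint case genuinely cannot be reduced to the intersecting one, since the stabiliser of $\sigma_1$ in $\Inn(\dq(S^{B_n}))$ respects the splitting $\{1,2\}\sqcup\{3,\dots,n\}$ and hence cannot carry a disjoint pair to an intersecting one; this is precisely why the Kneser graph appears and why $n\ge 5$ is needed, as for $n=4$ the graph $K(4,2)$ is a disjoint union of three edges, which permits a strictly smaller nontrivial quotient and reflects the exceptional quotients of $B_4$.
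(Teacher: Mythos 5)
Your overall skeleton matches the paper's: both arguments work with the $\binom{n}{2}$ half-twists/band generators $a_{ij}$, reduce the inequality to injectivity of $f$ on this set, and get sharpness from the quotient $B_n\to\Sigma_n$ (equivalently, via Theorem~\ref{Artin to Coxeter quandle}, from the involutory quotient). The sharpness half of your argument is complete and correct. The problem is that the entire content of the lower bound lies in the propagation step, which you explicitly defer (``the hard part is the propagation step'') and for which your proposed mechanism has a concrete obstruction you name but do not overcome. To transport the coincidence $f(a_P)=f(a_{P'})$ along an edge of the Johnson or Kneser graph you must apply \emph{one and the same} inner automorphism to both $a_P$ and $a_{P'}$. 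But $S_{\sigma_k}^{\epsilon}$ carries $a_{ij}$ to a band generator only for one choice of the sign $\epsilon$ when $|\{i,j\}\cap\{k,k+1\}|=1$ (e.g.\ $\sigma_1^{-1}a_{23}\sigma_1=a_{13}$ while $\sigma_1 a_{23}\sigma_1^{-1}=\sigma_2^{-1}\sigma_1\sigma_2\notin X$), and when both $P$ and $P'$ meet $\{k,k+1\}$ in one point the required signs can conflict. So the set of band generators is not $\Inn$-invariant, an arbitrary Johnson/Kneser edge cannot obviously be reached while keeping both members of the pair inside $X$, and the connectivity argument does not yet apply. As written, the lower bound is not proved.

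The paper's proof (following \cite[Lemma~8]{kolay21}) uses a different and complete mechanism that sidesteps this entirely: it never moves the coincident pair. Given $f(a_P)=f(a_{P'})$ with $P\neq P'$, one chooses a third arc whose half-twist $a_R$ commutes with one of $a_P,a_{P'}$ and satisfies the braid relation with the other (possible for $n\ge 5$). Writing $c=f(a_R)$ and $a=f(a_P)=f(a_{P'})$, the commutation gives $c*a=c$ and the braid relation gives $c*a=a*^{-1}c$, whence $a=c*c=c$. Thus the coincidence \emph{absorbs} a third generator rather than being transported, and iterating this collapses $f$ on the standard generating set $S$, hence on all of $\dq(S^{B_n})$ since $S$ generates it as a quandle --- contradicting $|Q|\ge 2$. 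If you want to salvage your route, you would need to either (i) verify case by case that every Johnson/Kneser edge can be reached from the given one by a chain of simultaneous conjugations keeping both entries in $X$, or (ii) replace the transport step by the absorption step above, at which point you have reproduced the paper's argument. Your closing observation about $K(4,2)$ and the exceptional quotient of $B_4$ is a nice sanity check on the hypothesis $n\ge 5$, but it does not fill the gap.
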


\begin{proof}
We view the braid group $B_n$ as the mapping class group of the disc $\mathbb{D}^2$ with $n$ marked points. Consider the set  $\{c_{i, j} \mid 1 \le i < j \le n \}$ of $n(n-1)/2$ arcs on $\mathbb{D}^2$ as shown in Figure \ref{figure:essen_arc} for $n=6$.
\begin{figure}
\includegraphics[width=0.33\textwidth]{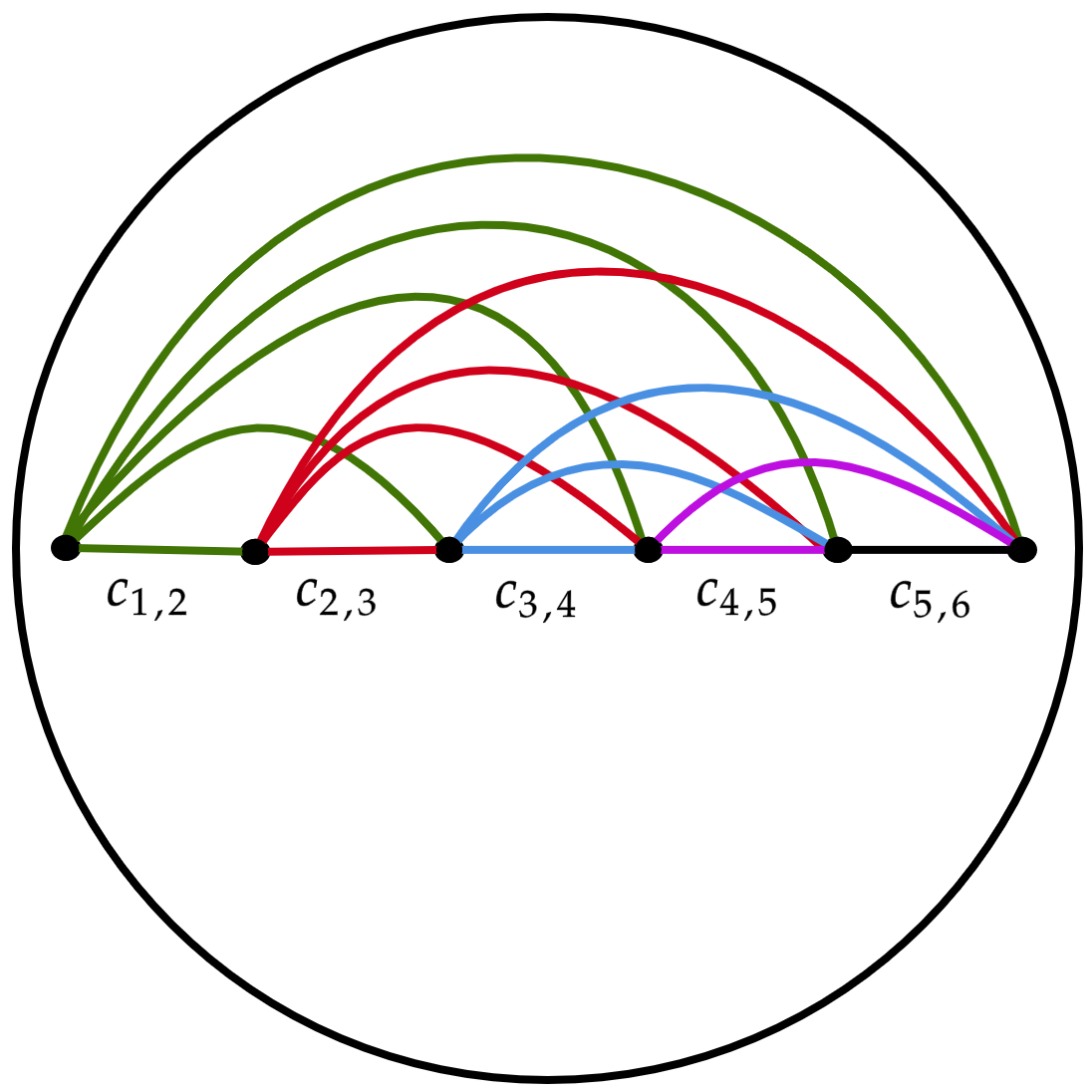}
\caption{Arcs $c_{i, j}$ on the disc for $n=6$.}
\label{figure:essen_arc}
\end{figure}
 Let $\sigma_{i, j}$ denote the isotopy class of the (anti-clockwise) half Dehn twist along the arc $c_{i, j}$ and let $X= \{\sigma_{i, j} \mid 1 \le i < j \le n  \}$. Then $B_n$ is generated by the subset $S=\{\sigma_{1, 2}, \sigma_{2, 3}, \ldots, \sigma_{n-1, n} \}$ of $X$. Note that all the elements of $X$ are conjugate to each other in $B_n$, and hence lie in $\dq(S^{B_n})$. Let $f:\dq(S^{B_n}) \to Q$ be a surjective quandle homomorphism onto a quandle with at least two elements. Then we have the following cases:
\par
Case (1): Suppose that $f$ is not injective on $S$, that is,  $f(\sigma_{i, i+1})=f(\sigma_{j, j+1})$ for some $i\neq j$. Since $n \ge 5$, by construction of $X$, we can choose an arc $c_{k, k+1}$ such that it has exactly one end point in common with precisely one of $c_{i, i+1}$ or $c_{j, j+1}$ and no end point in common with the other. Without loss of generality, we can assume that $c_{k, k+1}$ and $c_{j, j+1}$ have one end point in common, while $c_{k, k+1}$ and $c_{i, i+1}$ are disjoint. Then, the half Dehn twists satisfy the relations 
$$\sigma_{k, k+1} \sigma_{j, j+1}  \sigma_{k, k+1} = \sigma_{j, j+1}  \sigma_{k, k+1}\sigma_{j, j+1}  \quad \textrm{and} \quad \sigma_{k, k+1} \sigma_{i, i+1}= \sigma_{i, i+1}\sigma_{k, k+1} .$$
Applying $f$ to the preceding relations and using the equality $f(\sigma_{i, i+1})=f(\sigma_{j, j+1})$, we obtain $f(\sigma_{k, k+1}) =f( \sigma_{i, i+1})$. Proceeding in this manner, we show that $f$ is constant on $S$. Since $S$ also generates $\dq(S^{B_n})$ as a quandle \cite[Proposition 3.2]{Dhanwani-Raundal-Singh-2021}, it follows that $f$ is constant on $\dq(S^{B_n})$, which is a contradiction. Hence, $f$ is injective on $S$.
\par
Case (2): Suppose that $f$ is not injective on $X$. There are two possibilities here. 
\par
Subcase (2a): Suppose that $f(\sigma_{i, i+1})=f(\sigma_{k, l})$ where $\{i, i+1 \} \ne \{k, l \}$ as sets. Since $n \ge 5$, we can choose an arc $c_{r,r+1}$ such that 
it has exactly one end point in common with precisely one of $c_{i, i+1}$ or $c_{k, l}$ and no end point in common with the other. If $c_{r, r+1}$ and $c_{i, i+1}$ have one end point in common, then as in Case (1) we obtain  $f(\sigma_{i, i+1}) =f( \sigma_{r, r+1})$. Similarly, if $c_{r, r+1}$ and $c_{k, l}$ have one end point in common, then as in Case (1) we get $f(\sigma_{i, i+1}) =f( \sigma_{r, r+1})$. Again, proceeding as in Case (1) leads to a contradiction.
\par
Subcase (2b): Suppose that $f(\sigma_{i, j})=f(\sigma_{k, l})$ where $\{i, j\} \ne \{k, l\}$. Since $n \ge 5$, we can choose an arc $c_{r,r+1}$ such that it has exactly one end point in common with precisely one of $c_{i, j}$ or $c_{k, l}$ and no end point in common with the other. Proceeding as before, we deduce that $f(\sigma_{i, j})=f(\sigma_{k, l})= f(\sigma_{r,r+1})$, which is Subcase (2a).
\par
Hence, it follows that the map $f$ must be injective on the set $X$, and therefore $|Q|\geq n(n-1)/2$. For sharpness of the bound, consider the surjective group homomorphism $B_n \to \Sigma_n$, where  $\Sigma_n$ is the symmetric group on $n$ symbols. The  group homomorphism induces a surjective quandle homomorphism $\dq(S^{B_n}) \to \dq(T^{\Sigma_n})$, where $T=\{(i, i+1) \mid 1 \le i \le n-1 \}$ is the image of $S$. Since $|\dq(T^{\Sigma_n})|= n(n-1)/2$, the assertion follows. 
\end{proof}

\begin{remark}
Note that, by Theorem \ref{Artin to Coxeter quandle},  $\dq(T^{\Sigma_n})$ is precisely the 2-quandle of  $\dq(S^{B_n})$. It is intriguing to know smallest quandle quotients of general Artin quandles.
\end{remark}
\medskip

\section{Finiteness of $n$-quandles of Dehn quandles of surfaces}\label{inv quotients of Dehn quandle}
In this main section, we consider finiteness of $n$-quandles of Dehn quandles of closed orientable surfaces. For each genus, with some exception, we determine all values of $n$ for which the $n$-quandle of the Dehn quandle of the surface $S_g$ is finite. We also determine the explicit structures of their 2-quandles.

\subsection{2 and 3-quandles of Dehn quandles of surfaces} 
Let  $S_g$ be a closed oriented surface of genus $g \ge 1$ and $$\Psi : \mathcal{M}_g \to \text{Sp}(2g, \mathbb{Z})$$ the symplectic representation of the mapping class group $\mathcal{M}_g$ of $S_g$. 
For an integer $n \ge 2$, let $\psi_n:\text{Sp}(2g,\mathbb{Z}) \to \text{Sp}(2g,\mathbb{Z}_n)$ be the mod $n$ reduction homomorphism and $$\Psi_n:= \psi_n \Psi :\mathcal{M}_g \to \text{Sp}(2g, \mathbb{Z}_n)$$ the mod $n$ reduction of the symplectic representation. Given a simple closed curve $a$ in $S_g$, let $\tau_a$ denote the (right) Dehn twists about $a$. Consider the surface $S_g$ with curves $a_i, b_i, c_i$ as in Figure \ref{figure2}.

\begin{figure}
\includegraphics[width=0.8\textwidth]{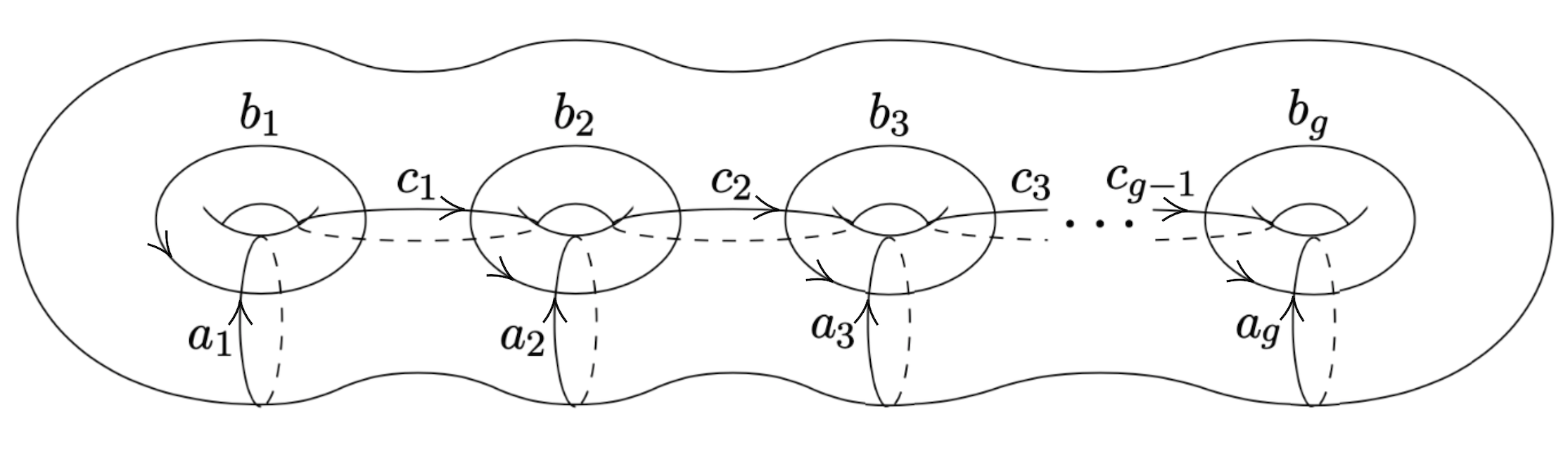}
\caption{Surface of genus $g$ with curves $a_i$, $b_i$ and $c_i$.}
\label{figure2}
\end{figure}

\begin{lemma}\label{com-a1-in-modp}
Let $p$ be a prime and $g \ge 1$. Then, an element $A\in \mathrm{Sp}(2g,\mathbb{Z}_p)$ lies in the centraliser of $\Psi_p(\tau_{a_1})$ if and only if $A$ has first column $(a,0,0,\ldots,0)$ and second row $(0,a,0,0,\ldots,0)$, where $a=\pm 1 \pmod p$.
\end{lemma}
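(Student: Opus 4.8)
The plan is to compute the matrix of $\Psi_p(\tau_{a_1})$ explicitly in the symplectic basis $(e_1,\ldots,e_{2g}) = (a_1,b_1,a_2,b_2,\ldots,a_g,b_g)$ of $H_1(S_g;\mathbb{Z}_p)$, for which the symplectic form is the block–diagonal matrix $J$ with blocks $\left(\begin{smallmatrix} 0 & 1 \\ -1 & 0\end{smallmatrix}\right)$, and then to read the centraliser condition off this matrix. A Dehn twist acts on homology by the transvection $x\mapsto x+\langle x,a_1\rangle\,a_1$. Since $\langle a_1,a_1\rangle=0$ and $a_1$ pairs nontrivially only with $b_1=e_2$, the matrix $T:=\Psi_p(\tau_{a_1})$ fixes every basis vector except $e_2$ and sends $e_2\mapsto e_2-e_1$. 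Hence $T=I-E_{12}$, differing from the identity in the single position $(1,2)$.

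First I would reduce the commuting relation. As $T-I=-E_{12}$ and $-1$ is a unit in $\mathbb{Z}_p$, the equation $AT=TA$ is equivalent to $AE_{12}=E_{12}A$. This entrywise comparison is the core of the argument: $AE_{12}$ is supported in its second column, where it reproduces the first column of $A$, whereas $E_{12}A$ is supported in its first row, where it reproduces the second row of $A$. Matching entries forces $A_{i1}=0$ for $i\neq 1$, i.e. first column $(a,0,\ldots,0)$; forces $A_{2j}=0$ for $j\neq 2$, i.e. second row $(0,a',0,\ldots,0)$; and, at the overlap position $(1,2)$, forces $a:=A_{11}=A_{22}=a'$. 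This already produces the claimed shape with a common diagonal entry $a$.

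The only remaining point, and the sole place the symplectic hypothesis is used, is to pin down $a=\pm 1\pmod p$. For this I would invoke the identity $\langle Ae_1,Ae_2\rangle=\langle e_1,e_2\rangle=1$. Since $Ae_1=a\,e_1$ and $\langle e_1,\,\cdot\,\rangle$ extracts the second coordinate, we get $\langle Ae_1,Ae_2\rangle=a\,(Ae_2)_2=a\,A_{22}=a^2$, so $a^2=1\pmod p$ and therefore $a=\pm 1$ for every prime $p$ (the two roots coinciding when $p=2$). The converse is then immediate: if a symplectic $A$ has first column $(a,0,\ldots,0)$ and second row $(0,a,0,\ldots,0)$, then $A_{11}=A_{22}$, so $AE_{12}=E_{12}A$ by the same entrywise computation, and $A$ commutes with $T$.

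I expect no conceptual obstacle; the difficulty is purely organisational. The point requiring care is to fix consistent conventions — the order of the symplectic basis, the sign and slot in the transvection formula, and the matrix of $J$ — so that the centraliser condition genuinely lands on the first column and second row as stated, and so that preservation of the pairing extracts exactly $a^2$ and not some other product of entries.
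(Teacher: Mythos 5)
Your proof is correct and follows essentially the same route as the paper: identify $\Psi_p(\tau_{a_1})=I_{2g}-E_{1,2}$, extract the first-column/second-row shape with $A_{11}=A_{22}$ from the entrywise commutation with $E_{1,2}$, and then use preservation of the symplectic form to force $a^2=1$, i.e.\ $a=\pm 1 \pmod p$. The only difference is that you spell out the "elementary linear algebra" step explicitly, which the paper leaves to the reader.
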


\begin{proof}
Let $E_{i,j}$ denote the elementary matrix with $(i,j)$-entry as $1$ and each other entry as $0$. Observe that $\Psi_p(\tau_{a_1})=I_{2g}-E_{1,2}$. If $A\in \mathrm{Sp}(2g,\mathbb{Z}_p)$ commutes with $\Psi_p(\tau_{a_1})$, then it follows from elementary linear algebra that $A$ has first column $(a,0,0,\ldots,0)$ and second row $(0,a,0,0,\ldots,0)$ for some $a\in \mathbb{Z}_p^\times$. Since $A^{t}JA=J$, where $J$ represents the symplectic form, we obtain $a^2=1 \pmod p$. Since  $a^2=1 \pmod p$ if and only if $ a=\pm 1\pmod p$, the assertion follows. Conversely, any symplectic matrix of this form commutes with $\Psi_p(\tau_{a_1})$.
\end{proof}

\begin{lemma}\label{gen-com-a1-in-modp}
Let $p$ be a prime  and $g \ge 1$. Then the centraliser of  $\Psi_p(\tau_{a_1})$ in $ \mathrm{Sp}(2g,\mathbb{Z}_p)$ is generated by $\Psi_p(S)\cup\{-I_{2g}\}$, where $$S=\{ \tau_{a_1},\tau_{a_2},\ldots,\tau_{a_g},\tau_{b_2},\ldots,\tau_{b_g},\tau_{c_1},\tau_{c_2},\ldots,\tau_{c_{g-1}} \}.$$
\end{lemma}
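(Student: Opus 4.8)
The plan is to realise the centraliser as a stabiliser inside a maximal parabolic of $\operatorname{Sp}(2g,\mathbb{Z}_p)$ and to build it up from its Levi and unipotent parts. Write $e_1,\dots,e_{2g}$ for the standard symplectic basis in which $\Psi_p(\tau_{a_1})=I_{2g}-E_{1,2}$, so that $e_1=[a_1]$ and, for any Dehn twist $\tau_c$, $\Psi_p(\tau_c)$ is the symplectic transvection $T_{[c]}\colon x\mapsto x+\langle x,[c]\rangle[c]$. By Lemma \ref{com-a1-in-modp} the centraliser $C$ of $\Psi_p(\tau_{a_1})$ equals $\{A : Ae_1=\pm e_1\}$ (within $\operatorname{Sp}$ the first-column condition already forces the second-row condition). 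Since $-I_{2g}$ is central and sends $e_1\mapsto -e_1$, for $p$ odd one has $C=\langle C^{+},-I_{2g}\rangle$, where $C^{+}:=\operatorname{Stab}(e_1)=\{A:Ae_1=e_1\}$ has index two, while for $p=2$ we have $-I_{2g}=I_{2g}$ and $C=C^{+}$. As every curve in $S$ is disjoint from $a_1$, each $[c]$ with $\tau_c\in S$ is orthogonal to $e_1$, hence $\Psi_p(\tau_c)e_1=e_1$ and $\Gamma:=\langle\Psi_p(S)\rangle\subseteq C^{+}$. Thus it suffices to prove $\Gamma=C^{+}$.

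First I would invoke the Levi decomposition $C^{+}=H\rtimes\operatorname{Sp}(W)$, where $W=e_1^{\perp}/\langle e_1\rangle$ is a symplectic space of dimension $2g-2$, $H$ is the unipotent (Heisenberg) radical, its centre is $Z(H)=\langle T_{e_1}\rangle=\langle\Psi_p(\tau_{a_1})\rangle$, and $H/Z(H)\cong W$ as an $\operatorname{Sp}(W)$-module. The first step is to show that $\Gamma$ surjects onto $\operatorname{Sp}(W)$: the images of $\Psi_p(\tau_{a_i}),\Psi_p(\tau_{b_i})$ for $i\ge 2$ and of $\Psi_p(\tau_{c_i})$ are transvections about the reductions of $[a_i],[b_i],[c_i]$ in $W$, and these are precisely the standard symplectic-basis-plus-chain configuration on the genus-$(g-1)$ subsurface carried by the handles $2,\dots,g$. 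Surjectivity of the symplectic representation of $\mathcal{M}_{g-1}$ (equivalently, the classical generation of $\operatorname{Sp}(2g-2,\mathbb{Z}_p)$ by these transvections) then gives $\Gamma\twoheadrightarrow\operatorname{Sp}(W)$.

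The crux, and where I expect the main difficulty, is the second step: $\Gamma\supseteq H$. The obstacle is that every generator in $S$ commutes with $\Psi_p(\tau_{a_1})=T_{e_1}$, so commutators with $T_{e_1}$ vanish and the unipotent directions cannot be produced naively; they must instead be extracted from the redundancy among the generators. Concretely, $[c_1]$ and $[a_2]$ have proportional reductions in $W$ but differ by a nonzero multiple of $e_1=[a_1]$, so $\Psi_p(\tau_{c_1})\Psi_p(\tau_{a_2})^{-1}$ lies in $\ker(C^{+}\to\operatorname{Sp}(W))=H$, and a short direct computation shows that its image in $H/Z(H)\cong W$ is $\pm[a_2]\neq 0$. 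Since $\Gamma\cap H$ is normal in $\Gamma$ and the $\Gamma$-action on $W$ factors through its full image $\operatorname{Sp}(W)$, the image of $\Gamma\cap H$ in $W$ is a nonzero $\operatorname{Sp}(W)$-submodule; irreducibility of the natural symplectic module $W$ (valid for every prime $p$) forces this image to be all of $W$. Together with $Z(H)=\langle\Psi_p(\tau_{a_1})\rangle\subseteq\Gamma$ this gives $\Gamma\cap H=H$.

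Finally, $\Gamma\supseteq H$ and $\Gamma\twoheadrightarrow\operatorname{Sp}(W)=C^{+}/H$ force $\Gamma=C^{+}$, whence $C=\langle C^{+},-I_{2g}\rangle=\langle\Psi_p(S),-I_{2g}\rangle$, as claimed. The genus-one case should be handled separately, where $W=0$, $C^{+}=\langle T_{e_1}\rangle=\langle\Psi_p(\tau_{a_1})\rangle$, and $C=\langle\Psi_p(\tau_{a_1}),-I_{2}\rangle$ by a direct $\operatorname{SL}(2,\mathbb{Z}_p)$ computation; here the asserted generation is immediate.
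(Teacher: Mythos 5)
Your proof is correct, but it takes a genuinely different route from the paper's. The paper argues by explicit column reduction: starting from a centralising matrix $A$ with $A_{1,1}=1$, it multiplies by auxiliary matrices $M_i=\Psi_p(\tau_{d_i}\tau_{a_1}^{-1}\tau_{a_{i+1}}^{-1})$ and their conjugates $N_i$ to clear the second column, reduces to a block matrix $\mathrm{diag}(I_2,D)$ with $D\in\mathrm{Sp}(2g-2,\mathbb{Z}_p)$ handled by the genus-$(g-1)$ transvections, and then must still show that the extra twists $\tau_{d_i}$ lie in $\langle S\rangle$; this last point requires topological input (cutting $S_g$ along $a_1$, Putman's exact sequence for $\mathcal{M}_g(\overrightarrow{a_1})$, and the known generating set of the cut surface's mapping class group). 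Your argument replaces all of this by the structural decomposition $\mathrm{Stab}(e_1)=H\rtimes\mathrm{Sp}(W)$ of the vector stabiliser: surjectivity onto the Levi comes from the same genus-$(g-1)$ transvections, the centre $Z(H)=\langle T_{e_1}\rangle$ is already in $\Gamma$, and the unipotent radical is captured by exhibiting the single element $T_{[c_1]}T_{[a_2]}^{-1}\in\Gamma\cap H$ with nonzero image in $H/Z(H)\cong W$ and invoking irreducibility of the natural module (I checked the computation: for $[c_1]=e_1+[a_2]$ one gets $T_{[c_1]}T_{[a_2]}^{-1}(e_2)=e_2-(e_1+[a_2])$, so the image in $W$ is $-\overline{[a_2]}\neq 0$). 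The trade-off is that the paper's proof is constructive and elementary (one can actually write a given centralising matrix as a word in $\Psi_p(S)\cup\{-I_{2g}\}$) but needs the mapping-class-group cutting lemma, whereas yours is non-constructive but purely linear-algebraic, isolates exactly why the redundant generator $\tau_{c_1}$ is needed (it is the only source of the Heisenberg directions, since every generator commutes with $T_{e_1}$), and would generalise to any generating set with the same homological footprint. Both proofs share the base case $g=1$ and the reduction to $A_{1,1}=1$ via $-I_{2g}$, and both ultimately rest on Lemma \ref{com-a1-in-modp} and on generation of $\mathrm{Sp}(2g-2,\mathbb{Z}_p)$ by the transvections of the genus-$(g-1)$ subsurface.
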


\begin{proof}
For $g=1$, a direct computation shows that the centraliser of  $\Psi_p(\tau_{a_1})$ in $ \mathrm{Sp}(2,\mathbb{Z}_p)$ is generated by $\Psi_p(\tau_{a_1})\cup\{-I_{2g}\}$.  We assume that $g \ge 2$. Clearly, $\Psi_p(S)\cup\{-I_{2g}\}$ is a subset of the centraliser of  $\Psi_p(\tau_{a_1})$ in $ \mathrm{Sp}(2g,\mathbb{Z}_p)$. If $A=(A_{i,j})$ lies in the centraliser of $\Psi_p(\tau_{a_1})$ in $\mathrm{Sp}(2g,\mathbb{Z}_p)$, then Lemma \ref{com-a1-in-modp} implies that $A_{1,1}= \pm 1$. Multiplying $A$ by $-I_{2g}$, we can assume that $A_{1,1}=1$.  It remains to prove such an $A$ can be written as a product of elements from $\Psi_p(S)$. Consider the curves $d_1,d_2,\ldots,d_{g-1}$ as shown in Figure \ref{curves}.
\begin{figure}
\includegraphics[width=0.7\textwidth]{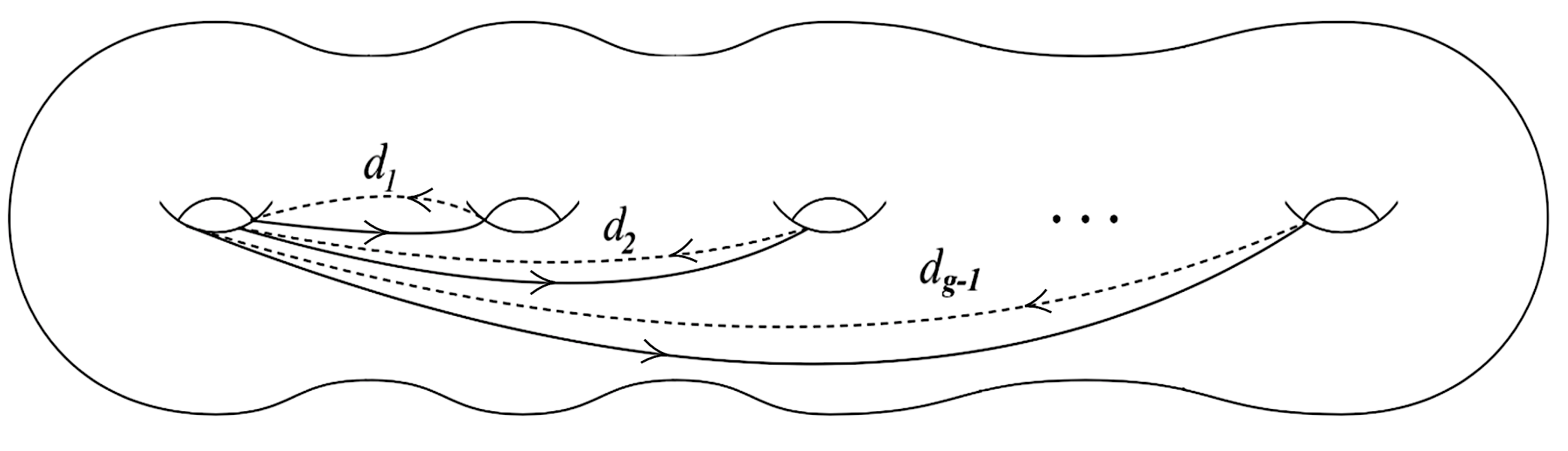}
\caption{Surface of genus $g$ with curves $d_i$.}
\label{curves}
\end{figure}
For each $1 \le i \le g-1$, let $M_i:=\Psi_p(\tau_{d_i}\tau_{a_1}^{-1}\tau_{a_{i+1}}^{-1})$. Then, we can see that $$M_i=I_{2g}+E_{2i+1,\,2}+E_{1,\,2i+2}.$$ 
Let $N_i:= \Psi_p(\tau_{a_{i+1}}\tau_{b_{i+1}}\tau_{a_{i+1}})^{-1} M_i ~\Psi_p(\tau_{a_{i+1}}\tau_{b_{i+1}}\tau_{a_{i+1}})$. Then, a direct computation shows that $$N_i=I_{2g}-E_{2i+2,\,2}+E_{1,\,2i+1}.$$ 
Now, consider the matrix
$$B:=\Bigg(\prod_{i=1}^{g-1}N_{g-i}^{A_{2(g-i)+2,2}}M_{g-i}^{-A_{2(g-i)+1,2}} \Bigg)A,$$ 
where a matrix raised to the power by the entry $A_{i,j}$ represents the same matrix raised to the power by a representative of $A_{i,j}$ in the subset $\{ 0, 1, 2, \ldots, p-1 \}$ of integers. By the choice of matrices $M_i$ and $N_i$, it follows that the second column of $B$ is $(B_{1,2},1,0,0,\ldots,0)$. Next, take 
$$C:=\Psi_p(\tau_{a_1})^{B_{1,2}} B.$$ It is easy to see that the first two columns of $C$ are $(1, 0, 0, \ldots, 0)$ and $(0, 1, 0, \ldots, 0)$, respectively. Since $C$ is symplectic, it has the form
$$C=\begin{bmatrix}
I_2 & O_{2,2g-2}\\
O_{2g-2,2} & D
\end{bmatrix},
$$ 
where $O_{m,n}$ is the $m\times n$ null matrix  and $D \in \text{Sp}(2g-2,\mathbb{Z}_p)$. Note that  $\text{Sp}(2g-2,\mathbb{Z}_p)$ is generated by $\Psi_2(\{\tau_{a_2},\ldots,\tau_{a_g},\tau_{b_2},\ldots,\tau_{b_g},\tau_{c_2},\ldots,\tau_{c_{g-1}} \}) $. We claim that each $\tau_{d_i}$ can be written as product of elements from $S$, which would complete the proof. Since  $d_1=c_1$, the claim follows that $g=2$.

\par
Let $\mcg_g(\overrightarrow{\alpha})$ be the subgroup of $\mcg_g$ consisting of all elements that preserve a non-separating simple closed curve $\alpha$ with orientation. Let $\mcg_{g,\alpha}$ be the mapping class group of the surface $S_{g, \alpha}$ (which is homeomorphic to a surface of genus $g-1$ and two boundary components) obtained by cutting the surface $S_g$ along the curve $\alpha$ and then taking its closure. Let $\{\delta_1,\delta_2\}$ be the two boundary components of $S_{g, \alpha}$. Then, by \cite[Lemma 1.20]{Putman}, there is a short exact sequence of groups 
$$1\to \langle \tau_{\delta_1}\tau_{\delta_2}^{-1} \rangle \to \mcg_{g,\alpha} \to \mcg_g(\overrightarrow{\alpha})\to 1,$$ where $\langle \tau_{\delta_1}\tau_{\delta_2}^{-1} \rangle \cong \mathbb{Z}$.
Applying this result to the oriented curve $a_1$, and using the generating set for $\mcg_{g, a_1}$ (see \cite[Corollary 4.16 and Figure 4.10]{Farb-Margalit2012}), we obtain 
$$\mcg_g(\overrightarrow{a_1})= \langle \tau_{a_1},\tau_{a_2},\ldots,\tau_{a_g},\tau_{b_2},\ldots,\tau_{b_g},\tau_{c_1},\tau_{c_2},\ldots,\tau_{c_{g-1}} \rangle.$$
Since each $\tau_{d_i}$ preserve the curve $a_1$ with orientation, the claim follows.
\end{proof}

Using Lemma \ref{gen-com-a1-in-modp} and choosing the hyperelliptic involution as a pre-image of $-I_{2g}$, the short exact sequence 
$$1\to \ker(\Psi_p) \longrightarrow \mcg_g \stackrel{\Psi_p}{\longrightarrow} \text{Sp}(2g,\mathbb{Z}_p)\to 1$$
induces the following short exact sequence at the level of centralisers.

\begin{corollary}\label{centraliser short exact mcg}
For each prime $p$ and each $g \ge 1$, there is a short exact sequence of groups
\begin{equation}\label{congruence exact sequence}
1\to \ker(\Psi_p) \cap\C_{\mcg_g}(\tau_{a_1})\to \C_{\mcg_g}(\tau_{a_1})\to \C_{\mathrm{Sp}(2g,\mathbb{Z}_p)}(\Psi_p(\tau_{a_1}))\to 1.
\end{equation}
\end{corollary}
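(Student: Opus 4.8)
The plan is to obtain the asserted short exact sequence by restricting the surjection $\Psi_p \colon \mcg_g \to \mathrm{Sp}(2g, \mathbb{Z}_p)$ to the relevant centralisers. Since any group homomorphism carries commuting elements to commuting elements, $\Psi_p$ maps $\C_{\mcg_g}(\tau_{a_1})$ into $\C_{\mathrm{Sp}(2g, \mathbb{Z}_p)}(\Psi_p(\tau_{a_1}))$, and the kernel of this restricted map is visibly $\ker(\Psi_p) \cap \C_{\mcg_g}(\tau_{a_1})$. Exactness at the left and middle terms is then purely formal, so the whole content of the statement reduces to a single claim: the restriction $\C_{\mcg_g}(\tau_{a_1}) \to \C_{\mathrm{Sp}(2g, \mathbb{Z}_p)}(\Psi_p(\tau_{a_1}))$ is surjective. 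This is the exact analogue of Lemma \ref{Cent_Artin_to_Cent_Coxeter}, where the surjectivity of the corresponding centraliser map was likewise the crux.

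To prove \emph{surjectivity}, I would argue generator by generator using the explicit generating set supplied by Lemma \ref{gen-com-a1-in-modp}, namely that $\C_{\mathrm{Sp}(2g, \mathbb{Z}_p)}(\Psi_p(\tau_{a_1}))$ is generated by $\Psi_p(S) \cup \{-I_{2g}\}$, where $S = \{\tau_{a_1}, \tau_{a_2}, \ldots, \tau_{a_g}, \tau_{b_2}, \ldots, \tau_{b_g}, \tau_{c_1}, \ldots, \tau_{c_{g-1}}\}$. It then suffices to exhibit, for each generator, a preimage under $\Psi_p$ lying in $\C_{\mcg_g}(\tau_{a_1})$. Each element of $\Psi_p(S)$ lifts to the corresponding Dehn twist in $S$; as recorded at the end of the proof of Lemma \ref{gen-com-a1-in-modp}, every element of $S$ lies in the oriented stabiliser $\mcg_g(\overrightarrow{a_1})$, and any mapping class fixing $a_1$ as an unoriented curve satisfies $\phi \tau_{a_1} \phi^{-1} = \tau_{\phi(a_1)} = \tau_{a_1}$, so each such twist commutes with $\tau_{a_1}$. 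For the remaining generator $-I_{2g}$, I would take as preimage the hyperelliptic involution $\iota$: since $\iota$ fixes $a_1$ setwise, the same conjugation formula gives $\iota \tau_{a_1} \iota^{-1} = \tau_{a_1}$, so $\iota \in \C_{\mcg_g}(\tau_{a_1})$. As every generator of the target centraliser thus lifts into $\C_{\mcg_g}(\tau_{a_1})$, the restricted map is onto, and the asserted short exact sequence follows.

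The only step carrying genuine input is surjectivity, and that input is entirely packaged in Lemma \ref{gen-com-a1-in-modp}: once the generators of the symplectic centraliser are recognised as images of Dehn twists preserving $a_1$ together with $-I_{2g}$, the lifting is immediate. The one subtlety worth flagging is that $-I_{2g}$ must be lifted to a mapping class that genuinely commutes with $\tau_{a_1}$, which is exactly why the hyperelliptic involution, rather than an arbitrary preimage, is chosen; its symmetry $\iota(a_i) = a_i$ for every $i$ is precisely what forces the commutation to hold.
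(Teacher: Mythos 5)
Your proposal is correct and follows exactly the paper's route: the paper likewise derives the sequence by restricting $\Psi_p$ to centralisers, invoking Lemma \ref{gen-com-a1-in-modp} for the generating set of $\C_{\mathrm{Sp}(2g,\mathbb{Z}_p)}(\Psi_p(\tau_{a_1}))$ and choosing the hyperelliptic involution as the preimage of $-I_{2g}$. You have simply written out the surjectivity check that the paper leaves implicit.
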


We briefly recall from \cite[Section 7]{Dhanwani-Raundal-Singh-2021} the construction of the  projective primitive homological quandle  $\mathcal{P}_{g,n}$ of the surface $S_g$, where $g \ge 1$ and $n \ge 2$. Let $\mathcal{P}'_g$ denote the set of all primitive elements in $\mathrm{H}_1(S_g, \mathbb{Z})$ and $\mathcal{P}'_{g,n}$ denote the set of all primitive elements in $\mathrm{H}_1(S_g,\mathbb{Z}_n)$. The algebraic intersection number $\hat{i}(-, -)$ gives a skew-symmetric (in fact, symplectic) bilinear form on the $\mathbb{Z}$-module $\mathrm{H}_1(S_g,\mathbb{Z})$.  For $x, y \in \mathcal{P}'_g$, the binary operation $$x*y :=x + \hat{i}(x,y) y$$ gives a quandle structure on $\mathcal{P}'_g$. Similarly, reduction modulo $n$ defines a quandle structure on $\mathcal{P}'_{g,n}$.
\par
Let $\mathcal{P}_g:=\mathcal{P}'_g/\mathbb{Z}_2$ and $\mathcal{P}_{g,n}:=\mathcal{P}'_{g,n}/\mathbb{Z}_2$ be quotients under the natural involutory action of $\mathbb{Z}_2=\{1,-1\}$. It is clear that quandle structures descends to quandle structures on $\mathcal{P}_g$ and $\mathcal{P}_{g,n}$. Further, reduction modulo $n$ gives a surjective quandle homomorphism $ \mathcal{P}'_g \to \mathcal{P}'_{g,n}$, which further induces a surjective quandle homomorphism $\mathcal{P}_g \to \mathcal{P}_{g,n}$. The quandles $\mathcal{P}_g$ and  $\mathcal{P}_{g,n}$ are called {\it projective primitive homological quandles} of $S_g$. Note that  $\mathcal{P}_{g,2}$ is an involutory quandle of order $2^{2g}-1$.
\par

Recall from \cite[Proposition 6.2]{Farb-Margalit2012} that a non-zero element of $\mathrm{H}_1(S_g, \mathbb{Z})$ is primitive  if and only if  it is represented by an oriented simple closed curve. For the isotopy class $a$ of an oriented simple closed curve in $S_g$, we denote by $[a] \in \mathrm{H}_1(S_g, \mathbb{Z})$ its homology class. Since $\mathcal{D}_g^{ns}$ consists of isotopy classes of unoriented simple closed curves, there are two choices for the homology class $[a] \in \mathcal{P}_g'$  for each $a \in \mathcal{D}_g^{ns}$. We choose $[a]$ such that the entry in its first non-zero coordinate (from left) is positive. This gives a surjection   $\mathcal{D}_g^{ns} \to \mathcal{P}'_g$ given by $a \mapsto [a]$. Composing  the surjections $\mathcal{D}_g^{ns} \to \mathcal{P}'_g$, $\mathcal{P}'_g \to \mathcal{P}_g$ and $ \mathcal{P}_g \to \mathcal{P}_{g,n}$ gives a surjection $$\phi: \mathcal{D}_g^{ns} \to \mathcal{P}_{g,n},$$
which is a quandle homomorphism due to \cite[Theorem 7.1]{Dhanwani-Raundal-Singh-2021}. Let $(\dq_g^{ns})_n$ denote the $n$-quandle of the Dehn quandle $\dq_g^{ns}$ and $$\pi : \dq_g^{ns}\to (\dq_g^{ns})_n$$ the natural quotient map. 
 
 \begin{theorem}\label{dg-iso-pg2}
Let $\dq_g^{ns}$ be the Dehn quandle of the closed orientable surface $S_g$ of genus $g \ge 1$. Then the following hold:
\begin{enumerate}
\item $(\dq_g^{ns})_2 \cong \mathcal{P}_{g,2}$ for each $g \ge 1$.
\item $(\dq_2^{ns})_3 \cong \mathcal{P}_{2,3}$.
\end{enumerate}
\end{theorem}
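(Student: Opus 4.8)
The plan is to run, almost verbatim, the argument of Theorem~\ref{Artin to Coxeter quandle}, with the natural surjection $\nu\colon{\bf A}\to W$ replaced by the mod~$p$ symplectic representation $\Psi_p\colon\mcg_g\to\mathrm{Sp}(2g,\mathbb{Z}_p)$ (taking $p=2$ for part~(1) and $p=3$, $g=2$ for part~(2)), and the Coxeter centraliser result replaced by Corollary~\ref{centraliser short exact mcg}. First I would check that $\phi\colon\dq_g^{ns}\to\mathcal{P}_{g,p}$ descends to the $p$-quandle. For $p=2$ this is immediate since $\mathcal{P}_{g,2}$ is involutory; for $p=3$ one computes that each inner automorphism $S_y\colon x\mapsto x+\hat i(x,y)y$ of $\mathcal{P}_{2,3}$ satisfies $S_y^{\,3}=\id$ over $\mathbb{Z}_3$, so $\mathcal{P}_{2,3}$ is a $3$-quandle. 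In either case the universal property gives a surjection $\bar\phi\colon(\dq_g^{ns})_p\to\mathcal{P}_{g,p}$ with $\bar\phi\,\pi=\phi$, and the entire content is the injectivity of $\bar\phi$.

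For injectivity, suppose $\phi(x)=\phi(y)$. As all Dehn twists about non-separating curves are conjugate in $\mcg_g$ (equivalently, $\dq_g^{ns}$ is connected), I may write $\tau_x=w\,\tau_{a_1}\,w^{-1}$ and $\tau_y=w'\,\tau_{a_1}\,w'^{-1}$ for some $w,w'\in\mcg_g$, so that $\phi(x)$ and $\phi(y)$ are the classes of $\Psi_p(w)[a_1]$ and $\Psi_p(w')[a_1]$ in $\mathcal{P}_{g,p}$. The key translation is that, inside $\mathrm{Sp}(2g,\mathbb{Z}_p)$, the equality $\phi(x)=\phi(y)$, i.e.\ $\Psi_p(w'^{-1}w)[a_1]=\pm[a_1]$, is equivalent to $\Psi_p(w'^{-1}w)\in\C_{\mathrm{Sp}(2g,\mathbb{Z}_p)}(\Psi_p(\tau_{a_1}))$. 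This is exactly what Lemma~\ref{com-a1-in-modp} supplies: the relation $A^tJA=J$ forces a matrix whose first column is $\pm[a_1]$ to also have the prescribed second row, and hence to commute with the transvection $\Psi_p(\tau_{a_1})=I_{2g}-E_{1,2}$. Feeding $\Psi_p(w'^{-1}w)$ into the centraliser sequence~\eqref{congruence exact sequence} of Corollary~\ref{centraliser short exact mcg}, I obtain a factorisation $w'^{-1}w=uv$ with $u\in\ker(\Psi_p)$ and $v\in\C_{\mcg_g}(\tau_{a_1})$.

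Because $v$ commutes with $\tau_{a_1}$, substituting $w=w'uv$ yields $\tau_x=w'u\,\tau_{a_1}\,u^{-1}w'^{-1}$, so $x$ and $y$ differ only by conjugating $a_1$ by $u$ before applying $w'$. Recalling that in the Dehn quandle conjugation by $\tau_\beta$ is the inner automorphism $S_\beta$, and that $S_\beta^{\,p}=\id$ in the $p$-quandle, it suffices to know that $u$ is a product of $p$-th powers of Dehn twists: each factor $\tau_c^{\,\pm p}$ then acts trivially after applying $\pi$, giving $\pi(x)=\pi(y)$. This is guaranteed by the (nontrivial) input that the level-$p$ congruence subgroup is the normal closure of $p$-th powers of Dehn twists, $\ker(\Psi_p)=\llangle \tau_c^{\,p}\mid c\in\dq_g^{ns}\rrangle$ (here one uses that conjugates of $\tau_c^{\,p}$ are again $p$-th powers of Dehn twists, since $\mcg_g$ acts transitively on non-separating curves). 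Granting this, $\bar\phi$ is injective, hence an isomorphism.

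The heart of the matter, and the only place where the surface case is harder than the Coxeter case of Theorem~\ref{Artin to Coxeter quandle}, is this identification of $\ker(\Psi_p)$ with $\llangle\tau_c^{\,p}\rrangle$: for Coxeter groups the corresponding kernel is the normal closure of the squares of the generators \emph{by definition}, whereas here it is the quandle-theoretic incarnation of the problem in~\cite[Problem 28]{MR0375281}. For $p=2$ and all $g\ge1$ this is the statement that the level-two congruence subgroup of $\mcg_g$ is generated by squares of Dehn twists, which I would cite from the literature. For $p=3$ the corresponding generation statement is considerably more delicate and is available precisely in genus two, which is exactly why part~(2) is restricted to $g=2$; locating or establishing this genus-two congruence result is the step I expect to require the most care. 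As a sanity check on the target, $\mathcal{P}_{2,3}$ has $(3^4-1)/2=40$ elements, so part~(2) asserts that the $3$-quandle of $\dq_2^{ns}$ has exactly $40$ elements.
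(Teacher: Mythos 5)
Your proposal follows essentially the same route as the paper: the universal property gives the surjection $\bar\phi$, injectivity is reduced via Lemma \ref{com-a1-in-modp} and Corollary \ref{centraliser short exact mcg} to factoring the conjugating element as $uv$ with $v\in\C_{\mcg_g}(\tau_{a_1})$ and $u\in\ker(\Psi_p)$, and $u$ is then killed in the $p$-quandle using that $\ker(\Psi_p)$ is the normal closure of $p$-th powers of Dehn twists along non-separating curves. The one ingredient you leave open --- that $\ker(\Psi_3)=\llangle \tau_c^{3}\rrangle$ in genus two --- is supplied in the paper by combining Humphries' inclusion $\ker(\Psi)\le \llangle \tau_{a_1}^{3}\rrangle$ with Mennicke's theorem that $\ker(\psi_3)$ is the normal closure of $I_4+3E_{1,2}$ in $\mathrm{Sp}(4,\mathbb{Z})$.
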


\begin{proof}
Let $\phi: \dq_g^{ns} \to \mathcal{P}_{g,2}$ be the surjective quandle homomorphism as discussed above. Since $(\dq_g^{ns})_2$ is involutory, $\phi$ induces a surjective quandle homomorphism $$\overline{\phi}:(\dq_g^{ns})_2\to \mathcal{P}_{g,2}$$ 
such that $\overline{\phi} ~\pi= \phi$. We claim that $\overline{\phi}$  is an isomorphism. Suppose that $\phi(x)=\phi(y)$ for $x,y\in \dq_g^{ns}$. It suffices to prove that $\pi(x)=\pi(y)$. We identify elements of $\dq_g^{ns}$ with corresponding Dehn twists in $\mcg_g$ with quandle operation as conjugation. Since $\dq_g^{ns}$ is a connected quandle, we have $x=\tau_{a_1}*^{\epsilon_1}\tau_{z_1}*^{\epsilon_2}\tau_{z_2} * \cdots *^{\epsilon_k}\tau_{z_k}$ and $y=\tau_{a_1}*^{\delta_1}\tau_{w_1}*^{\delta_2}\tau_{w_2}* \cdots *^{\delta_r}\tau_{w_r}$ for some $z_i, w_j \in \dq_g^{ns}$ and 
$\epsilon_i, \delta_j \in \{ 1, -1 \}$, where $a_1$ denotes the usual curve (see Figure \ref{figure2}). Since $\phi(x)=\phi(y)$, we have 
$$\phi(\tau_{z_k}^{\epsilon_k}\cdots \tau_{z_2}^{\epsilon_2}\tau_{z_1}^{\epsilon_1}\tau_{a_1}\tau_{z_1}^{-\epsilon_1} \tau_{z_2}^{-\epsilon_2}\cdots \tau_{z_k}^{-\epsilon_k})= \phi(\tau_{w_r}^{\delta_r}\cdots \tau_{w_2}^{\delta_2}\tau_{w_1}^{\delta_1}\tau_{a_1}\tau_{w_1}^{-\delta_1} \tau_{w_2}^{-\delta_2}\cdots \tau_{w_r}^{-\delta_r}).$$
It follows from \cite[Proposition 7.6(ii)]{Dhanwani-Raundal-Singh-2021} that $\phi$ is simply the restriction of $\Psi_2:\mathcal{M}_g \to \text{Sp}(2g,\mathbb{Z}_2)$. Thus, the preceding equation can be written as 
$$\Psi_2(\tau_{z_k}^{\epsilon_k}\cdots \tau_{z_2}^{\epsilon_2}\tau_{z_1}^{\epsilon_1}\tau_{a_1}\tau_{z_1}^{-\epsilon_1} \tau_{z_2}^{-\epsilon_2}\cdots \tau_{z_k}^{-\epsilon_k})= \Psi_2(\tau_{w_r}^{\delta_r}\cdots \tau_{w_2}^{\delta_2}\tau_{w_1}^{\delta_1}\tau_{a_1}\tau_{w_1}^{-\delta_1} \tau_{w_2}^{-\delta_2}\cdots \tau_{w_r}^{-\delta_r}),$$
which implies that $\Psi_2(\tau_{w_1}^{-\delta_1}\tau_{w_2}^{-\delta_2} \cdots \tau_{w_r}^{-\delta_r} \tau_{z_k}^{\epsilon_k} \cdots \tau_{z_2}^{\epsilon_2} \tau_{z_1}^{\epsilon_1})$
commutes with $\Psi_2(\tau_{a_1})$. Using Corollary \ref{centraliser short exact mcg}, we can write $\tau_{w_1}^{-\delta_1}\tau_{w_2}^{-\delta_2} \cdots \tau_{w_r}^{-\delta_r} \tau_{z_k}^{\epsilon_k} \cdots \tau_{z_2}^{\epsilon_2} \tau_{z_1}^{\epsilon_1}=u v$ such that $\Psi_2(u)=I_{2g}$ and $v$ commutes with $\tau_{a_1}$ (by choosing an appropriate section to \eqref{congruence exact sequence}). By \cite[Proposition 2.1]{Humphries1992}, we know that $\ker(\Psi_2)$ is generated by squares of Dehn twists along non-separating curves. Thus, 
\begin{eqnarray*}
&& \pi(\tau_{a_1}*^{\epsilon_1}\tau_{z_1}*^{\epsilon_2}\tau_{z_2} * \cdots *^{\epsilon_k}\tau_{z_k}*^{-1}*^{-\delta_r} \tau_{w_r}* \cdots *^{-\delta_2} \tau_{w_2}*^{-\delta_1} \tau_{w_1})\\
&=&\pi(\tau_{a_1}*v*u)\\
&=&\pi(\tau_{a_1}*u), \quad \text{since $v$ commutes with $\tau_{a_1}$}\\
&=&\pi(\tau_{a_1})*\pi(u)\\
&=& \pi(\tau_{a_1}),
\end{eqnarray*}
which implies that $\pi(x)=\pi(y)$. This completes the proof of assertion (1).
\medskip

For assertion (2), we claim that kernel of $\Psi_3:\mathcal{M}_2 \to \text{Sp}(4,\mathbb{Z}_3)$ is generated by cubes of Dehn twists along non-separating curves. Let $N= \llangle \tau_{a_1}^3\rrangle$ be the normal closure in $\mcg_2$ of the set of cubes of Dehn twists along all non-separating simple closed curves on $S_2$. By \cite[Proposition 2.1]{Humphries1992}, $\ker(\Psi) \le N$. Let $M=\llangle I_4 + 3 E_{1,2}\rrangle$ be the normal closure of  $I_4 + 3 E_{1,2}$ in $\text{Sp}(4,\mathbb{Z})$ and $\psi_3:\text{Sp}(4,\mathbb{Z}) \to \text{Sp}(4,\mathbb{Z}_3)$ be the mod 3 reduction homomorphism. By \cite[Hilfssatz 9.2]{MR0181676}, $\ker(\psi_3)= M$. Hence, it follows that
$$N/\ker(\Psi) \cong \Psi(N)=M=\ker(\psi_3).$$
This gives
$$\mcg_2/N \cong \big(\mcg_2/\ker(\Psi)\big)/\big(N/ \ker(\Psi) \big) \cong \text{Sp}(4,\mathbb{Z})/ \ker(\psi_3) \cong \text{Sp}(4,\mathbb{Z}_3).$$
Since $\Psi_3= \psi_3 \Psi$, the preceding isomorphism is, in fact, induced by $\Psi_3$. Hence, it follows that $\ker(\Psi_3)=N= \llangle \tau_{a_1}^3\rrangle$. The proof of assertion (2) now follows analogously to that of assertion (1).
\end{proof}

\begin{remark}
Lemma \ref{gen-com-a1-in-modp} is inspired by \cite{MR4447971, DDPR}, wherein similar arguments have been used to understand the stabiliser of  a homological vector in the mapping class group $\mcg_g$, which also preserves the fiber over that homological vector under the homomorphism $\phi:  \mathcal{D}_g^{ns} \to \mathcal{P}_{g,n}$.
\end{remark}
\medskip

\subsection{$n$-quandles of Dehn quandles of surfaces}
We now present our main result, which can be thought of as the Dehn quandle analogue of a similar result \cite{MR3704243} of Hoste and Shanahan for link quandles. 

\begin{theorem}\label{finiteness n-quandle surface}
Let $\dq_g^{ns}$ be the Dehn quandle of the closed orientable surface of genus $g \ge 1$. Then the following hold:
\begin{enumerate}
\item $(\dq_1^{ns})_n$ is finite if and only if $n=2,3,4,5$.
\item $(\dq_2^{ns})_n$ is finite if and only if $n=2,3$.
\item If $g\geq 3$, then $(\dq_g^{ns})_n$ is finite for $n=2$ and infinite for $n \neq 2, 3, 4, 6, 8, 12$.
\end{enumerate}
\end{theorem}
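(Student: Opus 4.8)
The plan is to handle the three genus ranges uniformly by playing the general reduction of Theorem \ref{finiteness of g_n} against the explicit identifications of Theorem \ref{dg-iso-pg2}. Since all Dehn twists about non-separating simple closed curves are conjugate in $\mcg_g$, the normal closure $\llangle s^n \mid s \in S\rrangle$ is just $\llangle \tau^n\rrangle$ for a single such twist $\tau$, so Theorem \ref{finiteness of g_n} specialises to the statement that if $(\dq_g^{ns})_n$ is finite, then $\mcg_g/\llangle\tau^n\rrangle$ is finite. Every \emph{infiniteness} assertion will therefore be reduced, via the contrapositive, to showing that the group $\mcg_g/\llangle\tau^n\rrangle$ is infinite for the relevant $n$; every \emph{finiteness} assertion will instead be read off from a direct quandle computation.

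I would first dispose of genus one. By the theorem of Niebrzydowski and Przytycki \cite{NiebrzydowskiPrzytycki2009}, $\dq_1^{ns}$ is isomorphic to the knot quandle of the trefoil, and hence $(\dq_1^{ns})_n$ is isomorphic to the $n$-quandle of the trefoil for every $n \ge 2$. Assertion (1) is then exactly the classification of Hoste and Shanahan \cite{MR3704243} applied to the trefoil, whose $n$-quandle is finite precisely for $n \le 5$. As an internal check on the infinite side, one may instead note that $\mcg_1/\llangle\tau^n\rrangle \cong \mathrm{SL}(2,\mathbb{Z})/\llangle\tau^n\rrangle$ surjects onto the von Dyck group $\langle x,y \mid x^2 = y^3 = (xy)^n = 1\rangle$, which is infinite once $\tfrac12+\tfrac13+\tfrac1n \le 1$, i.e. for $n \ge 6$; combined with Theorem \ref{finiteness of g_n} this reproves infiniteness of $(\dq_1^{ns})_n$ for $n \ge 6$.

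For the finiteness parts of (2) and (3) I would simply invoke Theorem \ref{dg-iso-pg2}: the isomorphism $(\dq_g^{ns})_2 \cong \mathcal{P}_{g,2}$ produces a quandle of order $2^{2g}-1$, settling $n=2$ for all $g$, while $(\dq_2^{ns})_3 \cong \mathcal{P}_{2,3}$ is finite (of order $40$), settling $n=3$ in genus two. All remaining cases are infiniteness statements, and by the reduction above they amount to the two assertions that $\mcg_2/\llangle\tau^n\rrangle$ is infinite for every $n \ge 4$, and that $\mcg_g/\llangle\tau^n\rrangle$ is infinite for $g \ge 3$ and $n \notin \{2,3,4,6,8,12\}$. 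These I would obtain from the known body of results on quotients of mapping class groups by powers of Dehn twists --- the circle of problems around \cite[Problem 28]{MR0375281} and the infiniteness theorems of Funar \cite{Funar1999} --- with the genus-two case treated through the hyperelliptic description of $\mcg_2$ and the resulting surjection onto a quotient of the mapping class group of the six-punctured sphere.

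The hard part is precisely this last input. The passage through Theorem \ref{finiteness of g_n} is purely formal, and the finite cases are immediate from Theorem \ref{dg-iso-pg2}, so all the genuine content sits in the infiniteness of $\mcg_g/\llangle\tau^n\rrangle$ outside the exceptional sets. These facts are genus-sensitive and delicate: the values $\{2,3,4,6,8,12\}$ are exactly those at which the relevant quantum representations of $\mcg_g$ can have finite image, and deciding finiteness for $n \in \{3,4,6,8,12\}$ when $g \ge 3$ is exactly what remains unresolved --- this is the source of the phrase ``with some exceptions'' in the statement. I therefore expect the proof to be short and structural everywhere except in carefully matching each admissible $n$ in the two ranges $g=2$ and $g \ge 3$ with a concrete infinite quotient.
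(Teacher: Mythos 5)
Your proposal is correct and follows essentially the same route as the paper: genus one via the Niebrzydowski--Przytycki identification with the trefoil quandle and Hoste--Shanahan, the finite cases via Theorem \ref{dg-iso-pg2}, and the infinite cases via the contrapositive of Theorem \ref{finiteness of g_n} applied to $\mcg_g/\llangle\tau^n\rrangle$, using Funar for $g\ge 3$. The genus-two infiniteness input you describe (the hyperelliptic passage to the six-punctured sphere) is exactly Humphries' Theorem 4 in \cite{Humphries1992}, which is the citation the paper uses.
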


\begin{proof}
It follows from \cite{NiebrzydowskiPrzytycki2009} that $\dq_1^{ns} \cong Q(3_1)$, the knot quandle of the trefoil knot $3_1$. Further, it follows from \cite[Section 5]{MR3704243} that $(Q(3_1))_n$ is finite if and only if $n=2,3,4,5$, which proves assertion (1).
\par

By Theorem \ref{dg-iso-pg2}, we obtain that $(\dq_2^{ns})_n$ is finite for $n=2, 3$. Let $\mathcal{M}_g^b$ be the mapping class group of the surface $S_g^b$ of genus $g$ with $b \ge 0$ boundary components and let $(\mathcal{M}_g^b)_n$ be the quotient of $\mathcal{M}_g$ by the normal closure of $n$-th powers of Dehn twists along non-separating simple closed curves.  We avoid writing $b$ in the notation whenever $b=0$.  It is proved in \cite[Theorem 4]{Humphries1992} that $(\mathcal{M}_2^b)_n$ is infinite for $b \ge0$ and $n>3$. Taking $b=0$, it follows from Theorem  \ref{finiteness of g_n} that  $(\dq_2^{ns})_n$ is infinite for $n>3$, which proves assertion (2).
\par

Assume that $g \ge 3$. By Theorem \ref{dg-iso-pg2},  $(\dq_g^{ns})_2$ is finite. On the other hand, \cite[Corollary 1.2]{Funar1999} gives that $(\mathcal{M}_g)_n$ is infinite for $n \neq 2, 3, 4, 6, 8, 12$. Thus,  Theorem \ref{finiteness of g_n} implies that $(\dq_g^{ns})_n$ is infinite for $n \neq 2, 3, 4, 6, 8, 12$. 
\end{proof}

\begin{remark}
Theorem \ref{finiteness n-quandle surface} strengthens and generalises \cite[Theorem 8.4]{Dhanwani-Raundal-Singh-2021} wherein only the finiteness of $(\dq_g^{ns})_2$ is proved for $g = 1, 2, 3$. We do not know whether $(\dq_g^{ns})_n$ is finite for $g\geq 3$ and $n=3, 4, 6, 8, 12$.
\end{remark}

We conclude with a bound on the size of the smallest non-trivial quandle quotient of the Dehn quandle of a surface. The proof is based on \cite[Lemma 11]{kolay21}.

\begin{proposition}\label{smallest quotient of dehn surface}
Let $g\geq 1$ and $f:\dq_g^{ns} \to Q$ be a surjective quandle homomorphism onto a quandle with at least two elements. Then $|Q|\geq 2^{2g}-1$, and the bound is sharp.
\end{proposition}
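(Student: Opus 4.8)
The plan is to mimic the strategy of Proposition \ref{bound of braid quandle image}, replacing the half Dehn twists on the punctured disc with Dehn twists along non-separating simple closed curves on $S_g$, and replacing the target of the sharpness construction by the projective primitive homological quandle $\mathcal{P}_{g,2}$ of order $2^{2g}-1$ (Theorem \ref{dg-iso-pg2}). First I would fix a suitable finite configuration of non-separating simple closed curves on $S_g$ that generate $\dq_g^{ns}$ as a quandle (for instance the Humphries-type generating system $\{a_i, b_i, c_i\}$ from Figure \ref{figure2}), together with enough auxiliary curves so that the geometric intersection pattern is rich: the key feature I need is that any two distinct generators in my chosen family can be ``connected'' through a chain of curves each adjacent pair of which intersects in exactly one point (and are otherwise disjoint from a third curve used as a separator). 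Whenever two curves $a, b$ meet in a single point, their Dehn twists satisfy the braid relation $\tau_a \tau_b \tau_a = \tau_b \tau_a \tau_b$, and when they are disjoint the twists commute $\tau_a \tau_b = \tau_b \tau_a$; these are exactly the two relations exploited in Case (1) of Proposition \ref{bound of braid quandle image}.

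The core of the argument is then a propagation lemma: if $f$ is a surjective quandle homomorphism to a quandle with at least two elements, and $f$ identifies two curves $\alpha, \beta$ in my family, then choosing a third curve $\gamma$ that meets exactly one of $\alpha, \beta$ in a single point and is disjoint from the other, the braid and commuting relations force $f(\gamma)$ to equal the common value $f(\alpha) = f(\beta)$. Concretely, applying $f$ to $\tau_\gamma \tau_\beta \tau_\gamma = \tau_\beta \tau_\gamma \tau_\beta$ and to $\tau_\gamma \tau_\alpha = \tau_\alpha \tau_\gamma$, and using $f(\alpha) = f(\beta)$, collapses $f(\gamma)$ onto $f(\alpha)$ exactly as in Case (1). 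Iterating this along the connectivity chains shows that a single coincidence forces $f$ to be constant on the entire generating family, hence constant on all of $\dq_g^{ns}$ (since that family generates the quandle), contradicting $|Q| \ge 2$. Therefore $f$ must be injective on the chosen family; by arranging the family to contain a transversal for the $2^{2g}-1$ classes of $\mathcal{P}_{g,2}$ (one curve per projective primitive homology class mod $2$), injectivity yields $|Q| \ge 2^{2g}-1$.

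For the sharpness half, I would invoke Theorem \ref{dg-iso-pg2}(1): the quotient map $\dq_g^{ns} \to (\dq_g^{ns})_2 \cong \mathcal{P}_{g,2}$ is a surjective quandle homomorphism onto a quandle with exactly $2^{2g}-1 \ge 3$ elements, realising the lower bound. Alternatively one can exhibit $\phi: \dq_g^{ns} \to \mathcal{P}_{g,2}$ directly as the mod-$2$ reduction of the symplectic representation, which is the explicit map already recalled before Theorem \ref{dg-iso-pg2}.

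The main obstacle I anticipate is the combinatorial/topological bookkeeping required to guarantee that the chosen curve family is both rich enough in single-point intersections that the propagation lemma connects every pair of generators, and large enough to surject onto all $2^{2g}-1$ homology classes so that the counting bound is attained exactly. On the surface $S_g$ this separator-and-chain argument is less transparent than the planar picture of arcs $c_{i,j}$ on the disc, and the genus-$1$ edge cases (where the family is small) need to be checked separately; this is precisely where I would lean on the explicit curve systems and the homological homomorphism $\phi$ rather than reproving intersection combinatorics from scratch.
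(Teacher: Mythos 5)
Your proposal follows essentially the same route as the paper's proof: choose a family $X$ of Dehn twists along non-separating curves representing all $2^{2g}-1$ primitive classes in $\mathrm{H}_1(S_g,\mathbb{Z}_2)$ (the paper defers this construction to Kolay's curve systems, just as you propose), propagate any coincidence $f(\tau_\alpha)=f(\tau_\beta)$ via a third curve satisfying a braid relation with one and commuting with the other to force $f$ constant on a generating set, and realise sharpness via the surjection onto $\mathcal{P}_{g,2}$. The argument and its dependencies match the paper's.
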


\begin{proof}
By \cite[Theorem 7.1]{Dhanwani-Raundal-Singh-2021}, there is a surjective quandle homomorphism from $\mathcal{D}_g^{ns}$ onto the projective primitive homological quandle $\mathcal{P}_{g,2}$ of order $2^{2g}-1$. It follows from \cite[Proposition 6.2]{Farb-Margalit2012} that a non-zero element of $\mathrm{H}_1(S_g,\mathbb{Z}_2)$ is primitive if and only if it is represented by an oriented simple closed curve on $S_g$. Following \cite[Section 5]{kolay21}, we suitably choose simple closed curves on $S_g$ such that they represent all primitive vectors in $\mathrm{H}_1(S_g,\mathbb{Z}_2)$. Let $X$ be the set of isotopy classes of (right) Dehn twists about these suitably chosen curves.  Viewing $\dq_g^{ns}$ as a subquandle of the conjugation quandle of $\mcg_g$, let $f:\dq_g^{ns} \to Q$ be a surjective quandle homomorphism. Suppose that $f(\tau_\alpha) =f(\tau_\beta)$ for some $\tau_\alpha, \tau_\beta \in X$ with $\tau_\alpha \neq \tau_\beta$. Then, by the construction of $X$,  can find a simple closed curve $\gamma$ such that $\tau_\gamma$ commutes with precisely one of $\tau_\alpha$ or $\tau_\beta$ and satisfies the braid relation with the other. Without loss of generality, suppose that 
$$\tau_\gamma \tau_\alpha \tau_\gamma= \tau_\alpha \tau_\gamma \tau_\alpha \quad \textrm{and} \quad \tau_\gamma \tau_\beta= \tau_\beta\tau_\gamma.$$
Applying $f$ to the preceding relations and using the equality $f(\tau_\alpha) =f(\tau_\beta)$, we obtain $f(\tau_\alpha) =f( \tau_\gamma)$. Proceeding in this manner, we show that $f$ is constant on $X$. Since $X$ also generates $\dq_g^{ns}$ as a quandle \cite[Proposition 3.2]{Dhanwani-Raundal-Singh-2021}, it follows that $f$ is constant on $\dq_g^{ns}$, which is a contradiction. Hence, $f$ must be injective on $X$, and therefore $|Q|\geq 2^{2g}-1$.
\end{proof}
\medskip

\noindent\textbf{Acknowledgments.}
The authors thank Prof. Louis Funar for pointing out an oversight in their earlier proof of assertion (3) of Theorem \ref{finiteness n-quandle surface}. Some of the ideas in this paper were developed by the authors during the preparation of \cite{Dhanwani-Raundal-Singh-2021, Dhanwani-Raundal-Singh-2022}. They thank Dr. Hitesh Raundal for his interest in the ideas. Dhanwani thanks IISER Mohali for the institute postdoctoral fellowship. Singh is supported by the Swarna Jayanti Fellowship grants DST/SJF/MSA-02/2018-19 and SB/SJF/2019-20/04. 

\medskip

\end{document}